\theoremstyle{plain} 
\newtheorem{thm}{Theorem}[section] 
\newtheorem{cor}[thm]{Corollary} 
\newtheorem{lem}[thm]{Lemma} 
\newtheorem{prop}[thm]{Proposition} 
\theoremstyle{definition}
\newtheorem{definition}[thm]{Definition} 
\theoremstyle{remark}
\newtheorem{remark}[thm]{Remark} 
\newtheoremstyle{ser}
{8pt}
{8pt}
{\it}
{}
{\sf\bfseries}
{:}
{6mm}
{}
\newtheoremstyle{serr}
{8pt}
{8pt}
{\normalfont}
{}
{\sf}
{.}
{6mm}
{}
\theoremstyle{ser}
\theoremstyle{serr}
\theoremstyle{ser}
\numberwithin{equation}{section}
     \newcommand{\sB}{\mathcal B}		\newcommand{\kB}{\mathscr{B}}
     		\newcommand{\kH}{\mathscr{H}}
     		\newcommand{\kM}{\mathscr{M}}
\def\XXint#1#2#3{{\setbox0=\hbox{$#1{#2#3}{\int}$ }
		\vcenter{\hbox{$#2#3$ }}\kern-.6\wd0}}
\newlength{\mylen}
\newcommand{\R}{\mathbb{R}}
\newcommand{\Z}{\mathbb{Z}}
\newcommand{\C}{\mathbb{C}}
\newcommand{\N}{\mathbb{N}}
\newcommand{\D}{\mathbb{D}}
\newcommand{\ep}{\varepsilon}
\newcommand{\ud}{\,\mathrm{d}} 
\newcommand{\norm}[1]{\| #1 \|}
\newcommand{\ran}{\textnormal{\textrm{ran}}}
\newcommand{\kernel}{\textnormal{\textrm{ker}~}}
\newcommand{\sot}{\mathrm{SOT}}
\newcommand{\wot}{\mathrm{WOT}}
\newcommand{\rP}[1]{\mathbf{R}\!\left( #1 \right)}
\newcommand{\spec}[1]{\textnormal{sp}\left( #1 \right)}
\newcommand{\Title}{On the convergence of the normalized power sequence of spectral operators on Hilbert space}%
\newcommand{\ShortTitle}{}
\newcommand{\raisemath}[1]{\mathpalette{\raisem@th{#1}}}
\newcommand{\raisem@th}[3]{\raisebox{#1}{$#2#3$}}
\newcommand{\AuthorOne}{Soumyashant Nayak}%
\newcommand{\AuthorOneAddr}{%
Statistics and Mathematics Unit, Indian Statistical Institute, 8th Mile, Mysore Road, RVCE Post, Bengaluru and Karnataka - 560 059, India
}%
\newcommand{\AuthorOneEmail}{%
soumyashant@isibang.ac.in
}%
\newcommand{\AuthorTwo}{Renu Shekhawat}%
\newcommand{\AuthorTwoAddr}{%
Statistics and Mathematics Unit, Indian Statistical Institute, 8th Mile, Mysore Road, RVCE Post, Bengaluru and Karnataka - 560 059, India
}%
\newcommand{\AuthorTwoEmail}{%
rs\_math1904@isibang.ac.in
}%
\newcommand{\Keywords}{Spectral radius formula, Yamamoto's theorem, Spectral operators, Haagerup-Schultz theorem}
\newcommand{\SubjectClassification}{47B40, 47A10, 15A18}%
\title[\MakeUppercase\ShortTitle]{\MakeUppercase \Title}
\author{\AuthorOne}%
\address[\AuthorOne]{\AuthorOneAddr}%
\email{\href{mailto:\AuthorOneEmail}{\AuthorOneEmail}}%
\author{\AuthorTwo}%
\address[\AuthorTwo]{\AuthorTwoAddr}%
\email{\href{mailto:\AuthorTwoEmail}{\AuthorTwoEmail}}%
\date{}%
\begin{document}

\keywords{\Keywords}%
\subjclass[2020]{\SubjectClassification}%

\begin{abstract}
Let $\kH$ be a complex Hilbert space, and let $\kB(\kH)$ denote the set of all bounded operators on $\kH$. For an operator $T \in \kB(\kH)$, let $|T| := (T^*T)^{\frac{1}{2}}$. For $A \in \kB(\kH)$, we refer to the sequence, $\{ |A^n|^{\frac{1}{n}} \}_{n \in \N}$, as the {\it normalized power sequence} of $A$. As our main result, we prove that the normalized power sequence of a spectral operator in $\kB(\kH)$ converges in norm, and provide an explicit description of the limit in terms of its idempotent-valued spectral resolution.\ Our approach substantially generalizes the corresponding result by the first-named author in the case of matrices in $M_m(\C)$ and supplements the Haagerup-Schultz theorem on $\sot$-convergence of the normalized power sequence of an operator in a $II_1$ factor.
\end{abstract}


\maketitle

\section{Introduction}

\vskip 0.1in

Let $\kH$ be a complex Hilbert space, and let $\kB(\kH)$ denote the set of all bounded linear operators on $\kH$. For $T \in \kB(\kH)$, the spectral radius formula, $$r(T) = \lim_{n \to \infty} \|T^n\|^{\frac{1}{n}},$$ indicates that the asymptotic growth-rate of the sequence, $\left\{ \|T^n\| \right\}_{n \in \N}$, is governed by, and thus carries information about, the spectral radius of $T$. This suggests that the asymptotic behaviour of the power sequence, $\{ T^n \}_{n \in \N}$, may yield finer information about the spectral properties of $T$. In this vein, the spectral radius formula was considerably generalized by Yamamoto as follows.

\vskip 0.1in

\noindent {\bf Yamamoto's theorem } (see \cite[Theorem 1]{yamamoto})
\textsl{Let $A$ be a matrix in $ M_m(\C)$ and $|\lambda_j|(A)$ denote the $j^{\textrm{th}}$-largest number in the list of modulus of eigenvalues of $A$ (counted with multiplicity). Then $$\lim_{n \to \infty} s_j(A^n)^{\frac{1}{n}} = |\lambda_j|(A),$$ where $s_j(A^n)$ denotes the $j^{\textrm{th}}$-largest singular value of $A^n$.
}

\vskip 0.1in

In \cite[Theorem 3]{yamamotoT}, Yamamoto extended the above theorem to a class of compact operators on an infinite-dimensional complex Hilbert space satisfying certain technical conditions. Subsequently, building on Yamamoto's approach, Davis \cite{Davis} later achieved a full extension of the theorem to all compact operators on an infinite-dimensional complex Hilbert space.

For an operator $T \in \kB(\kH)$, let $|T| := (T^* T)^{\frac{1}{2}}$. For $A \in \kB(\kH)$, we refer to the sequence $\{|A^n|^\frac{1}{n}\}_{n \in \N}$,  as the normalized power sequence of $A$. In \cite{stronger_form_yamamoto}, the first-named author has shown a spatial version of Yamamoto's theorem which asserts that the normalized power sequence of any matrix in $M_m(\C)$ converges in norm. In fact, an explicit description of the limiting positive operator is provided in terms of the diagonalizable part in the Jordan-Chevalley decomposition of the matrix as follows.

\vskip 0.1in

\noindent {\bf Theorem 3.8 in \cite{stronger_form_yamamoto}.} 
\textsl{
Let $A \in M_m(\C)$ and $\{ a_1, \ldots, a_k \}$ be the set of modulus of eigenvalues of $A$ such that $0 \le a_1 < a_2 < \cdots < a_k$. Let $A = D + N$ be the Jordan-Chevalley decomposition of $A$ into its commuting diagonalizable and nilpotent parts ($D, N$, respectively). For $1 \le j \le k$, let $E_j$ be the orthogonal projection onto the subspace of $\C ^m$ spanned by the eigenvectors of $D$ corresponding to eigenvalues with modulus less than or equal to $a_j$, and set $E_0 := 0$. Then the following assertions hold:
\begin{itemize}
    \item[(i)] The normalized power sequence of $A$ converges in norm to the positive-semidefinite matrix, $$\sum_{j=1}^k a_j (E_j - E_{j-1}).$$
    \item[(ii)] A non-zero vector $x \in \C ^m$ is in $\mathrm{ran}(E_j) \backslash \mathrm{ran}(E_{j-1})$ if and only if $$\lim_{n \to \infty} \|A^n x\|^{\frac{1}{n}} = a_j.$$ 
\end{itemize}
}

\vskip 0.1in

In \cite{huang_tam_extensions}, the above result has been generalized to the context of real semisimple Lie groups by Huang and Tam. In \cite{bhat_bala}, Bhat and Bala have shown that the normalized power sequence of a compact operator on an infinite-dimensional complex separable Hilbert space is norm-convergent, making essential use of the above result for matrices. In this article, as our main result we prove a generalization of \cite[Theorem 3.8]{stronger_form_yamamoto} to the context of spectral operators on $\kH$ (see \cite{dunford_schwartz_III}). We note the relevant result below.

\vskip 0.1in

\noindent {\bf The Main Result} (see Theorem \ref{thm:spectral}) 
\textsl{
Let $A$ be a spectral operator in $\kB(\kH)$, and $e_A$ be the idempotent-valued spectral resolution of $A$. For $\lambda \in \R$, let  $F_{\lambda}$ denote the orthogonal projection onto the range of $e_A(\D_{\lambda})$, where $\D_{\lambda} \subset \C$ denotes the closed disc of radius $\lambda$ centred at the origin. Then the following assertions hold:
\begin{itemize}
    \item[(i)]  $\{ F_{\lambda} \}_{\lambda \in \R}$ is a bounded resolution of the identity and 
    $$\lim_{n \to \infty} |A^n|^{\frac{1}{n}} = \int_{0}^{\mathrm{r}(A)} \lambda \; \ud F_{\lambda}, \textrm{ in norm}.$$
    Moreover, the spectrum of the limiting positive operator, $\int_{0}^{\mathrm{r}(A)} \lambda \; \ud F_{\lambda}$, is, $$|\spec{A}| := \{ |\lambda| : \lambda \in \spec{A} \},$$ the modulus of the spectrum of $A$.
    \item[(ii)]  For every vector $x \in \kH$, there is a smallest non-negative real number $\lambda_x$ such that $x$ lies in the range of the spectral idempotent $e_A(\D_{\lambda _x})$, which may be obtained as the following limit, $$\lim_{n \to \infty} \|A^n x\|^{\frac{1}{n}} = \lambda _x.$$
\end{itemize}
}

\vskip 0.1in

Furthermore, in Theorem \ref{thm:exp_one_p_group}, as a consequence of Theorem \ref{thm:spectral}, we obtain some results pertaining to the asymptotic behaviour of one-parameter groups in $\kB(\kH)$ whose infinitesimal generator is a spectral operator. Our techniques in proving Theorem \ref{thm:spectral} are necessarily different from those used in \cite{stronger_form_yamamoto}, where the trace in $M_m(\C)$ plays an important role in inferring spectral properties of $A$ via the computation of its moments. Since there are spectral operators which are \textbf{not} trace-class (for example, invertible scalar-type operators when $\kH$ is infinite-dimensional), a direct imitation of the proof strategy in \cite{stronger_form_yamamoto} is destined to fail.

On another note, it is worth pointing out that although in \cite{stronger_form_yamamoto} the spectral radius formula is never used, and in fact, follows as a consequence of \cite[Theorem 3.8]{stronger_form_yamamoto}, in this article we make generous use of the spectral radius formula in our arguments. Fuglede's theorem (see \cite[Theorem I]{fuglede}) helps us in splitting the problem into two parts, one involving invertible spectral operators, and another involving spectral operators whose spectrum is contained in a small disc centred at the origin of $\C$. The argument is based on a careful analysis of the interplay of these parts.

Let $\kM$ be a $II_1$ factor.\ In \cite[Theorem 8.1]{haagerup_schultz}, Haagerup and Schultz showed that the normalized power sequence of an operator in $\kM$ converges in the strong-operator topology and described the spectral resolution of the limiting positive operator in terms of the so-called Haagerup-Schultz projections. An operator in $\kM$ is said to be $\sot$-quasinilpotent if its normalized power sequence converges to $0$ in the strong-operator topology. In this context, Dykema and Krishnaswamy-Usha (\cite{Angles_HS}) identified an appropriate modification of the notion of spectrality called the UNZA property (uniformly non-zero angles property). An operator in $\kM$ is said to have the UNZA property if the angles between its Haagerup–Schultz projections are uniformly bounded away from zero. In \cite[Theorem 4.7-(ii)]{Angles_HS}, it is noted that every operator in $\kM$ with the UNZA property has a decomposition of the form $D+Q$ where $D$ is a scalar-type operator and $Q$ is a $\sot$-quasinilpotent operator such that $DQ=QD$. In \cite[Example 5.1]{Angles_HS}, an example is given of an operator in $\kM$ which is {\bf not} spectral in the sense of Dunford.

Interestingly, the path in \cite{Angles_HS} is in the opposite direction to the one in this article. There, one starts with the Haagerup-Schultz theorem about $\sot$-convergence of the normalized power sequence of operators in $\kM$, and uses its consequences to establish the `spectrality' of a large class of operators in $\kM$ (the operators with UNZA property), whereas we start with the spectrality of an operator and use it to prove the norm-convergence of its normalized power sequence.

\section{Preliminaries}

In this section, we document notation, and some basic concepts and lemmas needed to follow the discussion in this article. The main reference that we use for basic results on spectral operators is \cite{dunford_schwartz_III}.

\vspace{0.3cm}\noindent{}\textbf{Notations.}
Throughout this article, $\kH$ denotes a complex Hilbert space, and $\kB(\kH)$ denotes the set of bounded operators on $\kH$. Let $T$ be an operator in $\kB(\kH)$. We define $|T| := (T^*T)^{\frac{1}{2}}$. The spectrum of $T$ is denoted by $\spec T$, and the spectral radius of $T$ is denoted by $\mathrm{r}(T)$. We say $T$ is {invertible in $\kB(\kH)$} to mean that $T$ has a bounded inverse in $\kB(\kH)$, denoted by $T^{-1}$. The range of $T$ is denoted by $\ran (T)$, and the projection onto the closure of the range of $T$ is denoted by $\rP T$. The sequence $\{|T^n|^\frac{1}{n}\}_{n \in \N}$ is said to be the {\it normalized power sequence} of $T$. For $r \ge 0$, the closed disc of radius $r$ centred at the origin in $\C$, is denoted by $\D_r$, and for $r < 0$, we stipulate that $\D_{r} := \varnothing$. For a complex number $z$, we denote the real part of $z$ by $\Re z$, and the closed half-plane $\{z \in \C:\Re z \le r\}$ is denoted by $\mathbb{H}_r$ for all $r \in \R$.

\vskip 0.1in

We make frequent use of the $C^*$-identity, $\norm {T^*T} = \norm T^2 = \norm {TT^*}$. Below we list two basic operator inequalities labelled (OI1) and (OI2), which we invoke repeatedly in our discussion.

\vskip 0.1in
    
\noindent \textsl{(i)} (\cite[Corollary 4.2.7]{KR-I}) If $A$ and $B$ are self-adjoint operators in $\kB(\kH)$ such that $A \le B$, then
\begin{equation}
\label{eqn:OI3}
T^*AT ~\le~ T^*BT \quad \textrm{ for all }~ T \in \kB(\kH). \tag{OI1}    
\end{equation}

\vskip 0.1in
    
\noindent \textsl{(ii)} (\cite[Proposition V.1.9]{Bhatia}) If $H$ and $K$ are positive operators in $\kB(\kH)$ such that $0 \le H \le K$, then
\begin{equation}
\label{eqn:OI4}
0 ~\le~  H^\frac{1}{n} ~\le~ K^\frac{1}{n} \quad \textrm{ for all }~ n \in \N.
\tag{OI2}
\end{equation}

\vskip 0.1in

\noindent {\bf Fuglede's theorem} (see \cite[Theorem I]{fuglede}). Let $N \in \kB(\kH)$ be a normal operator. Then, for an operator $T \in \kB(\kH)$, $TN=NT$ if and only if $T$ commutes with every spectral projection of $N$. Consequently, $TN=NT$ if and only if $TN^* = N^*T$.

\begin{lem} 
\label{lem:isolated0}
\textsl{
Let $H$ be a positive operator in $\kB(\kH)$. Then the following are equivalent :
\begin{itemize}
    \item[(i)] $\ran (H)$ is a closed subspace of $\kH$.
    \item[(ii)] Either $H$ is invertible, or $0$ is an isolated point of $\spec{H}$.
    \item[(iii)] $\lim_{n \to \infty} H^{\frac{1}{n}} = \rP{H}$ in norm.
\end{itemize}
}
\end{lem}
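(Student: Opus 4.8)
The plan is to invoke the spectral theorem for the positive operator $H$ and to treat condition (ii) as the hub through which (i) and (iii) are linked. Write $E$ for the spectral measure of $H$, whose support is $\spec{H} \subseteq [0, \norm{H}]$. Since $H$ is self-adjoint, $\ker H = \ran(H)^{\perp}$, so $\rP{H}$ is precisely the spectral projection $\1{(0,\infty)}(H) = I - E(\{0\})$, and the orthogonal decomposition $\kH = \ker H \oplus \overline{\ran(H)}$ reduces $H$. On $\ker H$ every function of $H$ that vanishes at $0$ is itself $0$, so the whole problem localizes to the summand $\overline{\ran(H)}$, on which $H$ restricts to a positive operator $H_0$ and $\rP{H}$ restricts to the identity. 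The key observation is that $0$ is isolated in (or absent from) $\spec{H}$ precisely when $\spec{H_0}$ is bounded away from $0$; this is the elementary dichotomy on which both equivalences turn.

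For (ii) $\Rightarrow$ (iii): when (ii) holds we have $\spec{H_0} \subseteq [\delta, \norm{H}]$ for some $\delta > 0$ (immediate if $H$ is invertible, and given by the spectral gap if $0$ is isolated). Since $\lambda \mapsto \lambda^{1/n}$ converges to the constant $1$ uniformly on the compact set $[\delta, \norm{H}]$, the continuous functional calculus gives $\norm{H_0^{1/n} - I} = \sup_{\lambda \in \spec{H_0}} |\lambda^{1/n} - 1| \to 0$, whence $\norm{H^{1/n} - \rP{H}} \to 0$. For the converse I would argue contrapositively: if (ii) fails then $H$ is not invertible and $0$ is not isolated, so $0$ is an accumulation point of $\spec{H}$ and there are $\lambda_k \in \spec{H}$ with $0 < \lambda_k \to 0$. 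Fixing $n$ and choosing, for each $k$, a unit vector $x_k$ in the range of the nonzero spectral projection $E([\lambda_k/2,\, 3\lambda_k/2])$ (so $x_k \in \overline{\ran(H)}$ and $\rP{H}\,x_k = x_k$), the functional calculus yields $\norm{(H^{1/n} - \rP{H})x_k} \ge 1 - (3\lambda_k/2)^{1/n}$, which tends to $1$ as $k \to \infty$. Hence $\norm{H^{1/n} - \rP{H}} \ge 1$ for every $n$, so (iii) fails.

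For (i) $\Leftrightarrow$ (ii) I would use the standard fact that a bounded operator has closed range if and only if it is bounded below on the orthogonal complement of its kernel; for our positive $H$ this complement is exactly $\overline{\ran(H)}$, and $H$ is bounded below there if and only if $\spec{H_0}$ is bounded away from $0$, i.e. if and only if (ii) holds. Concretely, if (ii) holds then $H_0 \ge \delta I$ on $\overline{\ran(H)}$ and $\ran(H) = \ran(H_0)$ is closed; if (ii) fails the same vectors $x_k$ satisfy $\norm{H x_k} \le 3\lambda_k/2 \to 0$ with $\norm{x_k} = 1$ and $x_k \in \overline{\ran(H)}$, so $H$ is not bounded below there and $\ran(H)$ is not closed. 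The only point requiring genuine care is the discontinuity at $0$ of the function $\lambda \mapsto \lambda^{1/n} - \1{(0,\infty)}(\lambda)$; this jump is exactly what forces one to split off $\ker H$ first and carry out all of the functional-calculus estimates on $\overline{\ran(H)}$. Once this localization is in place I expect no deeper obstacle, since both equivalences then reduce to the dichotomy between $0$ being separated from $\spec{H}$ and $0$ being an accumulation point of it.
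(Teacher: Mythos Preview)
Your argument is correct. Both you and the paper route everything through the continuous functional calculus, and for (ii)$\Leftrightarrow$(iii) the arguments are essentially identical: the paper phrases it as uniform convergence of $\lambda\mapsto\lambda^{1/n}$ on $\spec{H}$ (keeping the point $0$ in the picture), while you first split off $\ker H$ and work on $\spec{H_0}\subseteq[\delta,\norm{H}]$, but the content is the same. The one genuine difference is in (i)$\Leftrightarrow$(ii): the paper invokes the Douglas factorization lemma to obtain $\lambda^2 E\le H^2\le\mu^2 E$ (with $E=\rP{H}$) and then operator monotonicity of the square root, whereas you use the more elementary characterization that $\ran(H)$ is closed iff $H$ is bounded below on $(\ker H)^{\perp}=\overline{\ran(H)}$. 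Your route is slightly more direct and avoids the external citation; the paper's route has the mild advantage of giving the two-sided bound $\lambda E\le H\le\mu E$ explicitly, which is used nowhere else in the lemma but is consonant with how closed-range positive operators are handled later (e.g.\ in the proof of Lemma~\ref{lem:convergence_of_nth_roots_of_sp_sequence_of_positive_operators}).
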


\begin{proof}
Let $E := \rP{H}$. Since $H$ is self-adjoint, $H$ and $E$ commute. 

\vskip 0.05in

\noindent \textsl{Proof of (i)$\Longleftrightarrow$(ii)} : Note that $\ran(H) = \ran(E)$ if and only if $\ran(H)$ is a closed subspace of $\kH$. By the Douglas factorization lemma (see \cite{Douglas_lemma}), $\ran(H) = \ran(E)$ if and only if there are positive real numbers $0 < \lambda < \mu$ such that $\lambda ^2 E \le H^2 \le \mu ^2 E$. Since $(\cdot)^{\frac{1}{2}}$ is an operator-monotone function, we have $\lambda E \le H \le \mu E$. Using the continuous function calculus in the context of the commutative $C^*$-algebra generated by $H$ and $E$, we observe that this is equivalent to the spectrum of $H$ being contained in $\{0 \} \cup [\lambda, \mu]$.

\vskip 0.1in

\noindent \textsl{Proof of (ii)$\Longleftrightarrow$(iii)} : Note that the commutative $C^*$-algebra generated by $H$ may be viewed as $C(\spec{H})$, the space of complex-valued continuous functions on $\spec{H}$.  Let $f_n : \spec{H} \to \R_{\ge 0}$ denote the continuous function given by $x \mapsto x^{\frac{1}{n}}$. We observe that the sequence of continuous functions, $\{ f_n \}_{n \in \N}$, converges uniformly if and only if either $0 \notin \spec{H}$ or $0$ is an isolated point of $\spec{H}$; moreover, in that case, $f_n$ converges uniformly to the indicator function of $\spec{H} \setminus \{0\}$. At the level of operators, this is equivalent to the assertion that $\lim_{n \to \infty} H^{\frac{1}{n}} = \rP{H}$ in norm. 
\end{proof}

\begin{lem}
\label{lem:nth_root}
\textsl{
Let $H$ be a positive operator in $\kB(\kH)$, and $\alpha \ge 0$. Then $$(H^n+\alpha ^n I)^{\frac{1}{n}} \le H + \alpha I.$$
}
\end{lem}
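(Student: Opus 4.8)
The plan is to reduce the claimed operator inequality to a scalar inequality by exploiting the fact that $H$ and $\alpha I$ commute, and then to invoke the operator monotonicity of the $n$-th root recorded in \eqref{eqn:OI4}. Since $H$ is positive, the operator $H + \alpha I$ is positive, and all the operators appearing in the statement lie in the commutative $C^*$-algebra generated by $H$; in particular they commute with one another and admit a simultaneous continuous functional calculus.

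The key step is to establish the intermediate operator inequality $H^n + \alpha^n I \le (H + \alpha I)^n$. Because $H$ commutes with $\alpha I$, the binomial theorem applies verbatim and gives $(H + \alpha I)^n = \sum_{k=0}^{n} \binom{n}{k} \alpha^{n-k} H^k$. The two extreme terms of this sum are exactly $H^n$ (at $k = n$) and $\alpha^n I$ (at $k = 0$), while every intermediate term $\binom{n}{k}\alpha^{n-k} H^k$ with $0 < k < n$ is a positive operator, being a nonnegative scalar multiple of the positive operator $H^k$. Hence $(H+\alpha I)^n - (H^n + \alpha^n I)$ is a sum of positive operators, and is therefore itself positive, which is precisely the desired inequality.

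With this in hand I would apply \eqref{eqn:OI4}, the operator monotonicity of $t \mapsto t^{1/n}$ on positive operators, to the chain $0 \le H^n + \alpha^n I \le (H + \alpha I)^n$, obtaining $(H^n + \alpha^n I)^{1/n} \le \big( (H + \alpha I)^n \big)^{1/n}$. Finally, since $H + \alpha I$ is positive, the uniqueness of the positive $n$-th root (via the functional calculus) yields $\big( (H+\alpha I)^n \big)^{1/n} = H + \alpha I$, completing the argument.

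I do not anticipate a genuine obstacle here: the only points requiring mild care are the justification of the binomial expansion, which hinges solely on the commutativity of $H$ and $\alpha I$, and the identification of the positive $n$-th root of $(H + \alpha I)^n$ with $H + \alpha I$. At bottom the result is the promotion, through the commutative functional calculus together with \eqref{eqn:OI4}, of the elementary scalar inequality $(x^n + \alpha^n)^{1/n} \le x + \alpha$ valid for all $x, \alpha \ge 0$.
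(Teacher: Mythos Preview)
Your proposal is correct and follows essentially the same approach as the paper: both arguments use the binomial expansion $(H+\alpha I)^n = H^n + \alpha^n I + \sum_{k=1}^{n-1}\binom{n}{k}\alpha^{n-k}H^k$ to deduce $H^n+\alpha^n I \le (H+\alpha I)^n$, and then invoke \eqref{eqn:OI4} (or equivalently the continuous functional calculus) to take $n$-th roots. The paper simply states this in one sentence, whereas you have spelled out the intermediate justifications in more detail.
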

\begin{proof}
Since $H^n + \alpha ^n I \le H^n + \alpha ^n I + \sum_{k=1}^{n-1} \binom{n}{k} \alpha ^{n-k} H^k = (H+\alpha I)^n$, the assertion follows from inequality (\ref{eqn:OI4}), or simply, the continuous function calculus.
\end{proof}

\begin{lem}
\label{lem:ran_order_pres}
\textsl{
Let $H, K$ be positive operators in $\kB(\kH)$ such that $0 \le H \le K$. Then $\rP{H} \le \rP{K}$.
}
\end{lem}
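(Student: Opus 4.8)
The plan is to translate the projection inequality $\rP{H} \le \rP{K}$ into a containment of kernels, and then verify the latter by a one-line quadratic-form computation. Recall that since $H$ is positive, it is self-adjoint, and hence $\overline{\ran(H)} = (\kernel H)^{\perp}$; thus $\rP{H}$ is the orthogonal projection onto $(\kernel H)^{\perp}$, and likewise for $K$. For orthogonal projections $P$ and $Q$, the relation $P \le Q$ holds precisely when $\ran(P) \subseteq \ran(Q)$. Consequently, the assertion $\rP{H} \le \rP{K}$ is equivalent to $(\kernel H)^{\perp} \subseteq (\kernel K)^{\perp}$, which, upon taking orthogonal complements, is equivalent to the reverse inclusion of kernels $\kernel K \subseteq \kernel H$. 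So it suffices to show that every vector annihilated by $K$ is annihilated by $H$.

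For the key step, fix $x \in \kernel K$, so that $Kx = 0$ and in particular $\ipdt{Kx}{x} = 0$. The hypothesis $0 \le H \le K$ gives, at the level of quadratic forms, $0 \le \ipdt{Hx}{x} \le \ipdt{Kx}{x} = 0$, and therefore $\ipdt{Hx}{x} = 0$.

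The only subtle point --- and hence the main (if modest) obstacle --- is to deduce $Hx = 0$ from the vanishing of the quadratic form $\ipdt{Hx}{x}$. Here I would pass to the positive square root $H^{\frac{1}{2}}$ furnished by the continuous function calculus, and write $\ipdt{Hx}{x} = \ipdt{H^{\frac{1}{2}}x}{H^{\frac{1}{2}}x} = \norm{H^{\frac{1}{2}}x}^{2}$. The vanishing of this quantity forces $H^{\frac{1}{2}}x = 0$, and applying $H^{\frac{1}{2}}$ once more yields $Hx = 0$. Thus $x \in \kernel H$, which establishes $\kernel K \subseteq \kernel H$ and completes the argument.
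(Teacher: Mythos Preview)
Your proof is correct. You reduce the projection inequality to the kernel containment $\kernel K \subseteq \kernel H$ and verify the latter by the quadratic-form computation $\|H^{1/2}x\|^2 = \ipdt{Hx}{x} \le \ipdt{Kx}{x} = 0$; nothing is missing.

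Your route differs from the paper's. The paper invokes the operator-monotonicity of $t \mapsto t^{1/n}$ (inequality (\ref{eqn:OI4})) to get $H^{1/n} \le K^{1/n}$ for every $n$, and then passes to the SOT-limit using the standard fact that $H^{1/n} \to \rP{H}$ in SOT for a positive operator $H$ (cited from \cite[Lemma 5.1.5]{KR-I}); the inequality is preserved in the limit. Your argument is more elementary in that it avoids both the L\"owner--Heinz-type monotonicity of fractional powers and the SOT-convergence lemma, using only the existence of a positive square root and the identification $\overline{\ran(H)} = (\kernel H)^{\perp}$ for self-adjoint $H$. The paper's approach, on the other hand, is thematically aligned with the rest of the article, where $n$-th roots and their limits are used repeatedly, and it yields the stronger pointwise inequality $H^{1/n} \le K^{1/n}$ along the way.
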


\begin{proof}
From inequality (\ref{eqn:OI4}), we observe that $0 \le H^{\frac{1}{n}} \le K^{\frac{1}{n}}$. Using \cite[Lemma 5.1.5]{KR-I} and taking $\sot$-limits as $n \to \infty$, we get the desired result.
\end{proof}

\begin{remark}
\label{rmrk:range_idem}
Let $T \in \kB (\kH)$, and let $S, S'$ be invertible operators in $\kB (\kH)$. Then $\ran (S T S') = \ran (ST) =  S \, \ran(T)$. Thus, $T$ has closed range if and only if $S T S'$ has closed range. In particular, for a projection $E \in \kB(\kH)$, it follows that $S^*ES$ has closed range, and 
$$\rP{S^*ES} = \rP{S^* E}.$$ 
\end{remark}

\begin{lem}\label{lem:rearrangment}
\textsl{
Let $E$ be a projection and $S$ be an invertible operator in $\kB(\kH)$. Then, 
$$\rP{S^*ES} = I - \rP{S^{-1}(I-E)S}.$$ 
}   
\end{lem}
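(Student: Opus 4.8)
The plan is to reduce the operator identity to a single statement about subspaces and then verify it using the behaviour of orthogonal complements under invertible maps. First I would identify the two ranges explicitly. By the range identity recorded in Remark \ref{rmrk:range_idem} (namely $\ran(STS') = S\,\ran(T)$ for invertible $S, S'$), and using that $S^*$ and $S^{-1}$ are invertible, we have $\ran(S^*ES) = S^*\ran(E)$ and $\ran(S^{-1}(I-E)S) = S^{-1}\ran(I-E)$. Since $E$ is a (self-adjoint) projection, $I-E$ is the projection onto $\ran(E)^\perp$, so the second range equals $S^{-1}\bigl(\ran(E)^\perp\bigr)$. Both subspaces are closed, being images of closed subspaces under the homeomorphisms $S^*$ and $S^{-1}$; hence $\rP{S^*ES}$ is exactly the orthogonal projection onto $S^*\ran(E)$, while $\rP{S^{-1}(I-E)S}$ is the orthogonal projection onto $S^{-1}\bigl(\ran(E)^\perp\bigr)$.

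With these identifications in hand, the claimed equality $\rP{S^*ES} = I - \rP{S^{-1}(I-E)S}$ becomes equivalent to the assertion that the two ranges are orthogonal complements of one another, that is,
$$S^*\ran(E) = \Bigl(S^{-1}\bigl(\ran(E)^\perp\bigr)\Bigr)^\perp.$$
Thus it suffices to establish this one subspace identity, after which the operator statement follows immediately from the fact that an orthogonal projection is determined by its range.

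The key computational step is the standard formula for how orthogonal complements transform under an invertible operator: for invertible $T \in \kB(\kH)$ and any subspace $M$, one has $(TM)^\perp = (T^*)^{-1}M^\perp$ (indeed $y \perp TM$ iff $\langle T^*y, m\rangle = 0$ for all $m \in M$ iff $T^*y \in M^\perp$). Applying this with $T = S^{-1}$ and $M = \ran(E)^\perp$, and using $(S^{-1})^* = (S^*)^{-1}$ together with the fact that $\ran(E)$ is closed (so $(\ran(E)^\perp)^\perp = \ran(E)$), yields
$$\Bigl(S^{-1}\bigl(\ran(E)^\perp\bigr)\Bigr)^\perp = S^*\,\ran(E),$$
which is precisely the required identity. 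I expect no serious obstacle: the only point demanding care is the bookkeeping of adjoints and inverses in the complement formula, making sure that taking $T = S^{-1}$ produces $(T^*)^{-1} = S^*$ rather than $S^{-1}$, and invoking the self-adjointness of $E$ at the right moment to pass between $I-E$ and $\ran(E)^\perp$.
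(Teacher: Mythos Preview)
Your proof is correct. Both arguments ultimately establish that $\ran(S^{-1}(I-E)S)$ is the orthogonal complement of $\ran(S^*ES)$, but they do so by different means. The paper proceeds by showing directly that $\ran(S^{-1}(I-E)S) = \ker(S^*ES)$: one inclusion comes from the product computation $(S^*ES)(S^{-1}(I-E)S) = S^*E(I-E)S = 0$, and the reverse inclusion from the injectivity of $S^*$ (if $S^*ESx = 0$ then $ESx = 0$, so $x = S^{-1}(I-E)Sx$). Your route instead identifies the two ranges as $S^*\ran(E)$ and $S^{-1}\ran(E)^\perp$ via Remark~\ref{rmrk:range_idem} and then invokes the general subspace identity $(TM)^\perp = (T^*)^{-1}M^\perp$ for invertible $T$. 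The paper's argument is slightly more self-contained and hands-on; yours is a bit more geometric and isolates a reusable lemma about how orthogonal complements transform under invertible maps. Neither approach is materially longer or harder than the other.
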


\begin{proof}
Since $S^{-1}(I-E)S$ is a closed-range operator, and $S^*ES$ is a positive closed-range operator, it suffices to show that $ \ran (S^{-1}(I-E)S) = \kernel{S^*ES}$. Clearly, $(S^*ES)\left( S^{-1}(I-E)S \right) = 0$, whence it follows that
$$\ran (S^{-1}(I-E)S) \subseteq \kernel{S^*ES}.$$ 

Conversely, let $x \in \kernel{S^*ES} $. Since $S^*$ is invertible, $Sx \in \kernel{E}$. In other words, $ESx = 0$, whence $S^{-1}(I-E)Sx = x$, that is, $x \in \ran (S^{-1}(I-E)S)$. Thus, \[\kernel{S^*ES} \subseteq \ran (S^{-1}(I-E)S). \qedhere\]
\end{proof}

\begin{definition}[Resolution of the identity (see \cite{KR-I}, \S 5.2)]\label{def:resolution_of_the_identity}
A family $\{E_\lambda\}_{\lambda \in \R}$ of projections in $\kB(\kH)$ indexed by $\R$ is said to be a {\it resolution of the identity} on $\kH$ if it satisfies :
\begin{enumerate}
     \item[(i)] $\bigwedge_{\lambda \in \R} E_\lambda = 0$ and $\bigvee_{\lambda \in \R} E_\lambda = I$ ;
     \item[(ii)] $E_\lambda \le E_{\lambda'}$ when $\lambda \le \lambda'$ ;
     \item[(iii)] $E_\lambda = \bigwedge_{\lambda' > \lambda} E_{\lambda'}$.
 \end{enumerate}
 
If there is a constant $a \in \R$ such that $E_\lambda =0$ when $\lambda < -a$ and $E_\lambda = I$ when $a < \lambda$, then $\{E_\lambda\}_{\lambda \in \R}$ is said to be a {\it bounded} resolution of the identity.
\end{definition}

\begin{thm}[Spectral resolution of a self-adjoint operator. (see \cite{KR-I}, Theorem 5.2.2)]\label{thm:resolution_of_the_identity}
\label{thm:spec_res}
\textsl{
If $A$ is a self-adjoint operator in $\kB(\kH)$, then there is a resolution of the identity, $\{E_\lambda\}$, on $\kH$, such that
\begin{enumerate}
    \item[(i)] $E_\lambda = 0$ when $\lambda < -\|A\|$, and $E_\lambda = I$ when $\|A\| \le \lambda$ ;
    \item[(ii)] $AE_\lambda \le \lambda E_\lambda$ and $\lambda(I-E_\lambda) \le A(I-E_\lambda)$ for each $\lambda$ ;
    \item[(iii)] $A = \int_{-\|A\|}^{\|A\|} \lambda \ud{E_\lambda}$ in the sense of norm convergence of approximating Riemann sums.
\end{enumerate}
Moreover, $\lambda_0 \notin \spec{A}$ if and only if there is a neighborhood of $\lambda_0$ where $E_{\lambda}$, as a function of $\lambda$, is constant.
} 
\end{thm}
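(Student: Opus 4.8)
The plan is to build the resolution of the identity directly from the Borel functional calculus of $A$ and then read off (i)--(iii) and the spectral characterization from elementary facts about the associated indicator functions. First I would invoke the continuous functional calculus to identify the abelian $C^*$-algebra generated by $A$ and $I$ with $C(\spec{A})$ via an isometric $*$-isomorphism carrying the identity function $t \mapsto t$ to $A$; since $A$ is self-adjoint, $\spec{A} \subseteq [-\|A\|, \|A\|] \subset \R$, so every $f \in C(\spec{A})$ is real-valued precisely when $f(A)$ is self-adjoint.

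Next I would extend this to a bounded Borel functional calculus. For fixed $x, y \in \kH$, the functional $f \mapsto \langle f(A)x, y \rangle$ is bounded on $C(\spec{A})$, so by the Riesz representation theorem it is integration against a regular complex Borel measure $\mu_{x,y}$ supported on $\spec{A}$. For a bounded Borel function $g$, the assignment $(x,y) \mapsto \int g \, \ud\mu_{x,y}$ is sesquilinear and bounded, hence defines a unique $g(A) \in \kB(\kH)$ with $\|g(A)\| \le \sup_{t \in \spec{A}} |g(t)|$; a routine verification shows $g \mapsto g(A)$ is linear, multiplicative, $*$-preserving, and extends the continuous calculus.

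Then I would set $E_\lambda := \1{(-\infty,\lambda]}(A)$ for each $\lambda \in \R$; as $\1{(-\infty,\lambda]}$ is real-valued and idempotent, $E_\lambda$ is a projection. Property (i) follows because $\1{(-\infty,\lambda]}$ vanishes on $\spec{A}$ for $\lambda < -\|A\|$ and equals $1$ there for $\lambda \ge \|A\|$; property (ii) follows from the pointwise inequalities $\1{(-\infty,\lambda]} \le \1{(-\infty,\lambda']}$ for $\lambda \le \lambda'$, $t\,\1{(-\infty,\lambda]}(t) \le \lambda\,\1{(-\infty,\lambda]}(t)$, and $\lambda(1 - \1{(-\infty,\lambda]}(t)) \le t(1 - \1{(-\infty,\lambda]}(t))$ carried through the calculus. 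The resolution-of-identity axioms $\bigwedge_\lambda E_\lambda = 0$, $\bigvee_\lambda E_\lambda = I$, and right-continuity $E_\lambda = \bigwedge_{\lambda' > \lambda} E_{\lambda'}$ follow from the corresponding monotone pointwise limits of indicators together with the SOT-continuity of the Borel calculus along bounded monotone sequences. For (iii), given a partition $-\|A\| = \lambda_0 < \cdots < \lambda_N = \|A\|$ of mesh $\delta$, the operator $A - \sum_k \lambda_k (E_{\lambda_k} - E_{\lambda_{k-1}})$ equals $h(A)$ for $h(t) = t - \sum_k \lambda_k \1{(\lambda_{k-1},\lambda_k]}(t)$, and $\sup_{t \in \spec{A}} |h(t)| \le \delta$, so the approximating Riemann sums converge to $A$ in norm.

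Finally, for the \emph{moreover}: if $\lambda_0 \notin \spec{A}$, choose $\eta > 0$ with $(\lambda_0 - \eta, \lambda_0 + \eta) \cap \spec{A} = \varnothing$; then $\1{(-\infty,\lambda]}$ and $\1{(-\infty,\lambda']}$ agree on $\spec{A}$ for all $\lambda, \lambda'$ in this interval, so $E_\lambda$ is constant there. Conversely, if $E_\lambda$ is constant on $(\lambda_0 - \eta, \lambda_0 + \eta)$, then $\1{(\lambda_0 - \eta, \lambda_0 + \eta)}(A) = 0$; were $\lambda_0 \in \spec{A}$, one could pick a continuous $f \ge 0$ supported in the interval with $f(\lambda_0) > 0$, and from $0 \le f(A) \le \|f\|_\infty\, \1{(\lambda_0 - \eta, \lambda_0 + \eta)}(A) = 0$ one would get $f(A) = 0$, contradicting $\|f(A)\| = \sup_{\spec{A}} f \ge f(\lambda_0) > 0$. \textbf{The main obstacle} is the measure-theoretic construction of the Borel functional calculus and, above all, verifying rigorously that the spectral projections assemble into a genuine resolution of the identity, namely establishing the order-continuity axioms ($\bigwedge$, $\bigvee$, and right-continuity) via monotone convergence, whereas the verifications of (i)--(iii) are comparatively routine once the calculus is in place.
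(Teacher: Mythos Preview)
The paper does not prove this statement at all: Theorem~\ref{thm:spec_res} is quoted from \cite[Theorem 5.2.2]{KR-I} as a preliminary result and is used without proof. So there is no ``paper's own proof'' to compare against. Your outline is a correct and standard route to the spectral theorem via the Borel functional calculus built from the continuous calculus and the Riesz representation theorem; the verifications of (i)--(iii) and of the spectral characterization from pointwise inequalities of indicator functions are sound, and you have correctly identified that the only substantive work lies in establishing the order-continuity axioms for $\{E_\lambda\}$ (right-continuity, $\bigwedge$, $\bigvee$) via SOT monotone convergence in the Borel calculus. For the record, Kadison--Ringrose's own proof proceeds somewhat differently, building $E_\lambda$ more directly from the $C^*$-algebra structure and the order on self-adjoint elements rather than first constructing the full Borel calculus, but your approach is equally valid and arguably more transparent once the calculus is in hand.
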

The family $\{E_\lambda\}_{\lambda \in \R}$ so determined, is called {\it the spectral resolution of $A$}.

\vskip 0.1 in

\begin{remark}
\label{rem:spectral_resolution_of_positive_operator}
\textsl{
Let $H$  be a positive operator in $\kB(\kH)$ with spectral resolution $\{E_\lambda\}_{\lambda \in \R}$. Then $E_\lambda = 0$ for $\lambda < 0$, and $E_\lambda = I$ for $ \lambda \ge \|H\|$, and $H = \int_{0}^{\|H\|} \lambda \ud{E_\lambda}$. For $\lambda_0 > 0$, the spectral resolution of the positive operator $(I-E_{\lambda_0})H$ is given by $\big\{ E_{\lambda_0} + (I-E_{\lambda_0}) E_{\lambda}\big\}_{\lambda \ge 0}$ .
}
\end{remark}

The notion of spectral resolution may be defined for a more general class of operators called {\it spectral operators}. Although spectral operators can be defined on Banach spaces, in this article we restrict attention to those acting on a Hilbert space. We refer to \cite{Dunford_survey} for the following definitions.

\begin{definition}[Idempotent-valued spectral resolution of an operator {(see \cite[\S 1]{Dunford_survey})}]
Let $T \in \kB(\kH)$ and $\sB$ be the $\sigma$-algebra of Borel sets in the complex plane. Let $e : \sB \to \kB(\kH) $ be an idempotent-valued map such that the following properties are satisfied : 
\begin{enumerate}
    \item [(i)] $T e(\Omega)=e(\Omega) T, \quad \spec{T|_{\ran (e(\Omega) ) }} \subseteq \overline{\Omega}, \quad \forall~\Omega \in \sB .$
    \item [(ii)] $e(\varnothing)= 0, \quad e(\C)=I, \;$ and $\; e \left(\C \setminus \Omega \right) = I - e(\Omega) \quad$ for $\Omega \in \sB$.
    \item[(iii)] For $\Omega _1, \Omega _2 \in \sB$, 
    \begin{align*}
        e(\Omega_1 \cap \Omega_2) &= e(\Omega_1) \wedge e(\Omega_2) = e(\Omega_1) e(\Omega_2),\\
        e(\Omega_1 \cup \Omega_2) &= e(\Omega_1) \vee e(\Omega_2) = e(\Omega_1) + e(\Omega_2) - e(\Omega_1) e(\Omega_2).
    \end{align*}
    \item [(iv)] $\norm{e(\Omega)} \leq M \; ~ \forall ~\Omega \in \sB$, for some constant $M \in \R_{\ge 0}$.
    \item [(v)] $e(\Omega)$ is countably additive in $\sot$, that is, for every sequence, $\left\{ \Omega_n \right\}_{n \in \N}$, of disjoint Borel sets, we have
$$
e\left(\bigcup_{n=1}^{\infty} \Omega_n \right) x = \sum_{n=1}^{\infty} e\left(\Omega_n \right) x, \quad \textrm{for every } x \in \kH.
$$
\end{enumerate}
If such a mapping $\Omega \mapsto e(\Omega)$ exists, then it is uniquely determined by $T$, and is called {\it the} idempotent-valued resolution of the identity for $T$, or {\it the} idempotent-valued spectral resolution of $T$. We denote the idempotent-valued spectral resolution of $T$ by $e_T$. 
\end{definition}

\begin{definition}[Spectral operator]
An operator $A \in \kB(\kH)$ which has an idempotent-valued spectral resolution is called a {\it spectral} operator.
\end{definition}

\begin{definition}[Scalar-type operator]
A spectral operator $D \in \kB(\kH)$ is said to be {\it scalar-type} if $D = \int_{\C} \lambda \, \ud e_D(\lambda)$, where $e_D$ is the idempotent-valued resolution of the identity for $D$.
\end{definition}

The following theorem gives the canonical reduction of bounded spectral operators, which we shall refer to as the Dunford decomposition of a spectral operator.

\begin{thm}[see \cite{dunford_schwartz_III}, Theorem XV.4.5]
\textsl{
An operator $A \in \kB(\kH)$ is spectral if and only if there is a scalar-type operator $D$ and a quasinilpotent operator $Q$, in $\kB(\kH)$ such that $DQ=QD$ and $A=D+Q$. Furthermore, this decomposition is unique, and $A$ and $D$ have identical spectra and identical idempotent-valued spectral resolutions.
}
\end{thm}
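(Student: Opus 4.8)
The plan is to prove both implications together with uniqueness, using the idempotent-valued resolution $e_A$ as the organizing object throughout. For the forward implication, suppose $A$ is spectral with resolution $e_A$; since $A$ is bounded, $\spec{A}$ is compact, and I would define the candidate scalar part as a norm-convergent limit of Riemann sums,
$$ D := \int_{\C} \lambda \, \ud e_A(\lambda) = \lim \sum_i \lambda_i \, e_A(\Omega_i), $$
where $\{\Omega_i\}$ is a Borel partition of a neighborhood of $\spec{A}$ into sets of small diameter and $\lambda_i \in \Omega_i$. Convergence in norm is guaranteed by the uniform bound $\norm{e_A(\Omega)} \le M$ from axiom (iv) together with the boundedness of $\lambda$ on the compact spectrum, so $D \in \kB(\kH)$. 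Because each $e_A(\Omega_i)$ commutes with every $e_A(\Omega)$ (axiom (iii)), $D$ commutes with all spectral idempotents; verifying that the inherited family $e_A$ satisfies axioms (i)--(v) relative to $D$ --- in particular that $\spec{D|_{\ran(e_A(\Omega))}} \subseteq \overline{\Omega}$, which follows by restricting the Riemann sums --- shows $D$ is scalar-type with $e_D = e_A$. Setting $Q := A - D$, the fact that $A$ commutes with every $e_A(\Omega)$ (axiom (i)) forces $AD = DA$, hence $DQ = QD$.

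The crux of the forward direction is the quasinilpotency of $Q$. Here I would fix $\epsilon > 0$, choose a finite Borel partition $\Omega_1, \dots, \Omega_k$ of $\spec{A}$ with each $\mathrm{diam}(\Omega_i) < \epsilon$, and set $e_i := e_A(\Omega_i)$, so that $\sum_i e_i = I$ and $Q e_i = e_i Q$. On the closed invariant subspace $\ran(e_i)$ both $A$ and $D$ have spectrum in $\overline{\Omega_i}$, and since their restrictions commute, subadditivity of the spectrum for commuting operators gives $\spec{Q|_{\ran(e_i)}} \subseteq \spec{A|_{\ran(e_i)}} - \spec{D|_{\ran(e_i)}} \subseteq \{ z - w : z, w \in \overline{\Omega_i}\} \subseteq \D_{\epsilon}$, whence $\mathrm{r}(Q e_i) = \mathrm{r}(Q|_{\ran(e_i)}) \le \epsilon$. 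Writing $Q^n = \sum_{i=1}^{k} (Q e_i)^n$ and estimating $\norm{Q^n} \le \sum_i \norm{(Q e_i)^n}$, the spectral radius formula applied to each $Q e_i$ yields $\norm{Q^n}^{1/n} \le k^{1/n} \cdot 2\epsilon$ for $n$ large, so $\mathrm{r}(Q) \le 2\epsilon$; letting $\epsilon \to 0$ gives $\mathrm{r}(Q) = 0$. I expect this gluing step to be the main obstacle: because the $e_i$ are merely uniformly bounded idempotents rather than orthogonal projections, there is no orthogonal decomposition of $\kH$ to exploit, and the estimate must run entirely through the uniform bound $M$ and the finiteness of the partition.

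For the converse, suppose $A = D + Q$ with $D$ scalar-type, $Q$ quasinilpotent, and $DQ = QD$; I would declare $e_A := e_D$ and verify axioms (i)--(v). Axioms (ii)--(v) are immediate, as they concern only the idempotent family inherited from $D$. For the commutativity part of axiom (i), it suffices to show $Q$ commutes with each $e_D(\Omega)$: since $D$ is scalar-type on a Hilbert space it is similar to a normal operator $N = S^{-1} D S$, and $DQ = QD$ gives $N (S^{-1} Q S) = (S^{-1} Q S) N$, so Fuglede's theorem makes $S^{-1} Q S$ commute with every spectral projection of $N$, and conjugating back by $S$ yields $Q \, e_D(\Omega) = e_D(\Omega) \, Q$. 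The spectral containment then follows from the invariance of the spectrum under addition of a commuting quasinilpotent: on $\ran(e_D(\Omega))$ the restriction of $Q$ is quasinilpotent and commutes with that of $D$, so the restriction of $A = D + Q$ has the same spectrum as $D|_{\ran(e_D(\Omega))} \subseteq \overline{\Omega}$. Thus $A$ is spectral with $e_A = e_D$.

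Finally, for uniqueness and the concluding claims, the converse already shows that any decomposition $A = D + Q$ of the stated form forces $e_A = e_D$; since $e_D = e_A$ is determined by $A$ and a scalar-type operator is recovered from its resolution via $D = \int_{\C} \lambda \, \ud e_A(\lambda)$, the scalar part $D$ --- and hence $Q = A - D$ --- is uniquely determined. The equality of idempotent-valued spectral resolutions $e_A = e_D$ is part of this identification, and $\spec{A} = \spec{D}$ follows once more from the invariance of the spectrum under addition of the commuting quasinilpotent $Q$.
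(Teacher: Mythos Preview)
The paper does not give its own proof of this theorem: it is stated in the Preliminaries section with a citation to \cite[Theorem~XV.4.5]{dunford_schwartz_III} and used as a black box thereafter. So there is no ``paper's proof'' to compare against; what you have written is a standalone sketch of the Dunford--Schwartz argument.

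That said, your outline is essentially correct and follows the standard route. Two points deserve a little more care. First, the norm convergence of the Riemann sums $\sum_i \lambda_i\, e_A(\Omega_i)$ is not quite as immediate as ``uniform bound plus compact spectrum'': since the $e_A(\Omega_i)$ are idempotents rather than orthogonal projections, you cannot read off $\norm{\sum_i c_i\, e_A(\Omega_i)}$ from $\max_i |c_i|$ directly. One needs the standard estimate (a consequence of the uniform bound $M$ in axiom~(iv)) that such sums are bounded by a fixed multiple of $\sup_i |c_i|$; this is in Dunford--Schwartz but is worth flagging. Second, in the converse you invoke the similarity of a scalar-type operator on $\kH$ to a normal operator in order to apply Fuglede; that similarity is itself a nontrivial result (it is exactly \cite[Theorem~XV.6.4]{dunford_schwartz_III}, cited in the paper's Remark on scalar-type operators), and in Dunford--Schwartz it appears \emph{after} the theorem you are proving. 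On a Hilbert space this does not create a genuine circularity --- the Mackey--Wermer similarity of bounded Boolean algebras of idempotents to projections can be established independently --- but you should make the dependency explicit. Alternatively, one can avoid Fuglede entirely by using the general fact (also in Dunford--Schwartz, Part~III) that any bounded operator commuting with a spectral operator commutes with its resolution of the identity.
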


\begin{remark}\label{rem:scalar-type_are_diagonalizable}
The Dunford decomposition of a spectral operator may be viewed as a generalization of the Jordan-Chevalley decomposition of matrices, as scalar-type operators are similar to normal operators (see \cite[Theorem XV.6.4]{dunford_schwartz_III}). More explicitly, if $D \in \kB(\kH)$ is a scalar-type operator, there is a normal operator $N$ and an invertible operator $S$, in $\kB(\kH)$ such that $D = S^{-1}NS$. It follows that the spectral projection of $N$ corresponding to Borel set $\Omega$ is given by
$$E(\Omega) = S e_D(\Omega) S^{-1},$$
where $e_D$ denotes the idempotent-valued spectral resolution of $D$.
\end{remark}

\begin{thm}[see \cite{dunford_schwartz_III}, Theorem XV.5.6]\label{thm:analytic_functions_of_spectral_operators}
\textsl{
Let $A$ be a spectral operator in $\kB(\kH)$ and let $e_A$ be the idempotent-valued spectral resolution of $A$. If $f : \C \to \C$ is an analytic function, then $f(A)$ is a spectral operator whose idempotent-valued spectral resolution is given by
 $$e_{f(A)}(\Omega) = e_A\left(f^{-1}(\Omega)\right) \textrm{ for a Borel subset } \Omega.$$
}
\end{thm}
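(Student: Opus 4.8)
The plan is to reduce the statement to the two parts out of which $A$ is built. By the Dunford decomposition quoted above, write $A = D + Q$ with $D$ scalar-type, $Q$ quasinilpotent, $DQ = QD$, and recall that $A$ and $D$ share the same idempotent-valued spectral resolution, so $e_A = e_D$. Since $D$ is scalar-type, $f(D) = \int_{\C} f(\lambda)\, \ud e_D(\lambda)$ is again scalar-type. The strategy is therefore to exhibit $f(A)$ in the form $f(D) + Q'$, where $Q'$ is quasinilpotent and commutes with $f(D)$: this is a bona fide Dunford decomposition of $f(A)$, so by the uniqueness clause $f(A)$ is spectral and $e_{f(A)} = e_{f(D)}$. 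The whole problem then collapses to computing $e_{f(D)}$ for the scalar-type operator $D$.

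The key step is to prove that $Q' := f(A) - f(D)$ is quasinilpotent. I would regard $t \mapsto f(D + tQ)$ as an entire $\kB(\kH)$-valued function of $t \in \C$; because $D + tQ$ commutes with $Q$, the chain rule gives $\tfrac{d}{dt} f(D+tQ) = f'(D+tQ)\,Q$, and the fundamental theorem of calculus yields
$$ f(A) - f(D) = \Bigl(\int_0^1 f'(D+tQ)\, \ud t\Bigr) Q = R\,Q, \qquad R := \int_0^1 f'(D+tQ)\, \ud t. $$
Here $R$ commutes with $Q$ (each integrand does) and with $D$; and since $Q$ commutes with $D$, the operator $f(D)$, being a norm-limit of functions of $D$, commutes with $Q$ and with $R$, hence with $Q' = RQ$. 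Passing to the commutative closed Banach subalgebra generated by $R$ and $Q$ and applying Gelfand theory (the spectral radius being intrinsic to $\kB(\kH)$), commuting elements obey $\mathrm{r}(RQ) \le \mathrm{r}(R)\,\mathrm{r}(Q) = 0$, so $Q'$ is quasinilpotent. This furnishes the required decomposition $f(A) = f(D) + Q'$ and hence $e_{f(A)} = e_{f(D)}$.

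It remains to identify $e_{f(D)}$. Using Remark \ref{rem:scalar-type_are_diagonalizable}, write $D = S^{-1} N S$ with $N$ normal and $S$ invertible, so that $f(D) = S^{-1} f(N) S$ and $e_D(\Omega) = S^{-1} E_N(\Omega) S$, where $E_N$ denotes the projection-valued spectral measure of $N$. The classical spectral mapping theorem for the spectral measure of a normal operator gives $E_{f(N)}(\Omega) = E_N\bigl(f^{-1}(\Omega)\bigr)$ for every Borel set $\Omega$, where $f^{-1}(\Omega)$ is Borel because $f$ is continuous. Conjugating back,
$$ e_{f(D)}(\Omega) = S^{-1} E_{f(N)}(\Omega) S = S^{-1} E_N\bigl(f^{-1}(\Omega)\bigr) S = e_D\bigl(f^{-1}(\Omega)\bigr) = e_A\bigl(f^{-1}(\Omega)\bigr), $$
which is the asserted formula.

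The main obstacle I anticipate is precisely the quasinilpotence of $f(A) - f(D)$: making the operator-valued chain-rule and fundamental-theorem-of-calculus computation rigorous (convergence of the integral defining $R$, and legitimacy of differentiating the holomorphic functional calculus along the commuting direction $Q$), together with a careful invocation of the commuting spectral-radius inequality via Gelfand theory. The remaining ingredients — that $f(D)$ is scalar-type and the spectral mapping theorem for the spectral measure of a normal operator — are standard, so the crux of the argument is the interplay between the functional calculus and the commuting quasinilpotent part.
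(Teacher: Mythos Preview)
The paper does not supply its own proof of this theorem: it is quoted verbatim from Dunford--Schwartz \cite[Theorem XV.5.6]{dunford_schwartz_III} as a background result in the preliminaries, with no argument given. So there is no in-paper proof to compare your proposal against.

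That said, your outline is sound and essentially recovers the standard argument. The decomposition $f(A)=f(D)+Q'$ with $Q'=f(A)-f(D)$ quasinilpotent and commuting with $f(D)$, followed by uniqueness of the Dunford decomposition, is exactly the right mechanism, and the reduction of $e_{f(D)}$ to the normal case via Remark~\ref{rem:scalar-type_are_diagonalizable} and the transformation rule $E_{f(N)}(\Omega)=E_N(f^{-1}(\Omega))$ is clean. Two small points worth tightening: first, your integral representation $f(A)-f(D)=\bigl(\int_0^1 f'(D+tQ)\,\ud t\bigr)Q$ is correct but heavier machinery than needed---since $f$ is entire and $D,Q$ commute, the power-series expansion $f(A)-f(D)=\sum_{k\ge 1}a_k\bigl((D+Q)^k-D^k\bigr)=\bigl(\sum_{k\ge 1}a_k\sum_{j=0}^{k-1}\binom{k}{j}D^jQ^{k-1-j}\bigr)Q$ already exhibits $Q'$ as $Q$ times an element of the commutative Banach algebra generated by $D$ and $Q$, making both the quasinilpotence (via Lemma~\ref{lem:inv_quas_sum_prod}(i) or Gelfand) and the commutation with $f(D)$ immediate without any analysis of operator-valued derivatives. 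Second, you should verify explicitly that the holomorphic functional calculus for $D$ agrees with the scalar-type integral $\int f(\lambda)\,\ud e_D(\lambda)$; this is routine (it holds for polynomials and both calculi are norm-continuous on entire functions restricted to $\spec{D}$), but it is the hinge on which the identification $e_{f(A)}=e_{f(D)}$ rests.
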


\section{A Preparatory Theorem}

For $S$ an invertible operator and $H$ a positive operator in $\kB(\kH)$, our main goal in this section is to prove the norm-convergence of the sequence, $\{ (S^*H^nS)^{\frac{1}{n}} \}_{n \in \N}$, as well as find its norm-limit.\ We first do this when $H$ has finite spectrum, and then use an approximation argument to obtain the result for the general case. This is a crucial step towards the proof of the main result of this article, namely Theorem \ref{thm:spectral}.

\begin{lem}
\label{lem:convergence_of_nth_roots_of_sp_sequence_of_positive_operators}
\textsl{
Let $H_1, H_2, \ldots, H_k$ be positive closed-range operators in $\kB(\kH)$. Then, for sequences $\{a_{1,n}\}_{n \in \N}, \{a_{2,n}\}_{n \in \N}, \ldots, \{a_{k,n}\}_{n \in \N}$ of non-negative real numbers, the limit
$$ \lim_{n \to \infty} \big(\sum_{i=1}^k a_{i,n}~ H_i\big)^\frac{1}{n} $$
exists in norm if and only if the limit
$$ \lim_{n \to \infty} \big(\sum_{i=1}^k a_{i,n}~ \rP{H_i}\big)^\frac{1}{n} $$
exists in norm. Moreover, when the limits exist, they coincide with each other.
}
\end{lem}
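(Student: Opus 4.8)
The plan is to exploit the fact that extraction of $n$-th roots asymptotically erases the quantitative spectral data of a closed-range positive operator, leaving only its range projection --- precisely the phenomenon recorded in Lemma~\ref{lem:isolated0}. Writing $E_i := \rP{H_i}$, the starting point is that since each $H_i$ is a positive operator with closed range, $0$ is either absent from or an isolated point of $\spec{H_i}$, so that $\spec{H_i} \subseteq \{0\} \cup [\lambda_i, \mu_i]$ for suitable reals $0 < \lambda_i \le \mu_i$. Applying the continuous function calculus in the commutative $C^*$-algebra generated by $H_i$ and $E_i$ (exactly as in the proof of Lemma~\ref{lem:isolated0}), this yields the two-sided operator inequality
$$\lambda_i E_i \;\le\; H_i \;\le\; \mu_i E_i.$$

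First I would set $\lambda := \min_{1 \le i \le k} \lambda_i > 0$ and $\mu := \max_{1 \le i \le k} \mu_i$, so that $\lambda E_i \le H_i \le \mu E_i$ holds simultaneously for every $i$. Multiplying by the non-negative scalars $a_{i,n}$ and summing over $i$ --- both operations being order-preserving --- produces the sandwich
$$\lambda \, Q_n \;\le\; P_n \;\le\; \mu \, Q_n, \qquad \textrm{where } \ P_n := \sum_{i=1}^k a_{i,n} H_i, \quad Q_n := \sum_{i=1}^k a_{i,n} E_i.$$
Since $P_n$ and $Q_n$ are positive, the operator-monotonicity of $t \mapsto t^{\frac{1}{n}}$ recorded in (\ref{eqn:OI4}) gives, upon pulling out the scalars,
$$\lambda^{\frac{1}{n}} Q_n^{\frac{1}{n}} \;\le\; P_n^{\frac{1}{n}} \;\le\; \mu^{\frac{1}{n}} Q_n^{\frac{1}{n}}.$$
Subtracting $Q_n^{\frac{1}{n}}$ and invoking the elementary fact that a self-adjoint $X$ squeezed as $aY \le X \le bY$ with $Y \ge 0$ satisfies $\|X\| \le \max(|a|,|b|)\,\|Y\|$, I obtain the key estimate
$$\big\| P_n^{\frac{1}{n}} - Q_n^{\frac{1}{n}} \big\| \;\le\; \varepsilon_n \, \big\| Q_n^{\frac{1}{n}} \big\|, \qquad \varepsilon_n := \max\!\big(|\lambda^{\frac{1}{n}}-1|,\,|\mu^{\frac{1}{n}}-1|\big),$$
where crucially $\varepsilon_n \to 0$ as $n \to \infty$, because $\lambda^{\frac{1}{n}}, \mu^{\frac{1}{n}} \to 1$.

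It then remains to run the two implications. If the limit $\lim_n Q_n^{\frac{1}{n}}$ exists in norm, then $\{\|Q_n^{\frac{1}{n}}\|\}_n$ is bounded, so the key estimate forces $\|P_n^{\frac{1}{n}} - Q_n^{\frac{1}{n}}\| \to 0$, whence $\lim_n P_n^{\frac{1}{n}}$ exists and coincides with it. Conversely, if $\lim_n P_n^{\frac{1}{n}}$ exists, then $\{\|P_n^{\frac{1}{n}}\|\}_n$ is bounded; combining the lower bound $\lambda^{\frac{1}{n}} Q_n^{\frac{1}{n}} \le P_n^{\frac{1}{n}}$ (which gives $\|Q_n^{\frac{1}{n}}\| \le \lambda^{-\frac{1}{n}}\|P_n^{\frac{1}{n}}\|$ for positive operators) with $\lambda^{-\frac{1}{n}} \to 1$ shows $\{\|Q_n^{\frac{1}{n}}\|\}_n$ is bounded as well, and the same estimate yields convergence of $Q_n^{\frac{1}{n}}$ to the common limit.

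The main obstacle I anticipate is the interplay in this final step: the key estimate only controls the difference of $n$-th roots up to the factor $\|Q_n^{\frac{1}{n}}\|$, which need not be bounded a priori since the scalars $a_{i,n}$ may grow arbitrarily fast; one therefore genuinely needs the existence of one of the two limits in order to bound this factor before concluding the other. The remaining points --- producing the uniform sandwich constants $\lambda,\mu$ across the finitely many operators, and checking that summing the (non-commuting) inequalities $\lambda E_i \le H_i \le \mu E_i$ against non-negative weights is legitimate --- are routine consequences of the closed-range hypothesis.
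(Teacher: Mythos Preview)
Your proof is correct and follows essentially the same strategy as the paper: obtain a uniform two-sided sandwich $\lambda E_i \le H_i \le \mu E_i$ (the paper writes the equivalent inverse form $\alpha H_i \le E_i \le \beta H_i$ via the auxiliary operator $S_i = h_{-1}(H_i)$), sum against the non-negative weights, take $n$-th roots using (\ref{eqn:OI4}), and let $\lambda^{1/n},\mu^{1/n}\to 1$. The only cosmetic difference is that the paper reads off both implications directly from the two-way sandwich, whereas you pass through the intermediate norm estimate $\|P_n^{1/n}-Q_n^{1/n}\|\le\varepsilon_n\|Q_n^{1/n}\|$ and a boundedness argument; both routes are equivalent.
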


\begin{proof}
Let $h_{-1} : \R_{\ge 0} \to \R_{> 0}$ be the function defined by $h_{-1}(x) = x^{-1}$ for $x \in \R_{>0}$, and $h_{-1}(0) = 1$, and $h_{\frac{1}{2}} : \R_{\ge 0} \to \R_{\ge 0}$ denote the square root function, $x \mapsto \sqrt{x}$.

For each $1 \le i \le k$, since the range of $H_i$ is closed, by Lemma \ref{lem:isolated0}, $\spec{H_i} \setminus \{0\}$ is a compact set, whence $h_{-1}$ is an invertible continuous function on $\spec {H_i}$. Note that the restriction of the function $h_{\frac{1}{2}} h_{-1} h_{\frac{1}{2}}$ to $\spec{H_i}$ is the indicator function on $\spec{H_i}$ whose support is the closed set $\spec{H_i} \backslash \{ 0 \}$. Using the continuous function calculus for $H_i$, we observe that the operator $S_i := h_{-1}(H_i)$ is positive and invertible in $\kB(\kH)$, satisfying
$$H_i^{\frac{1}{2}}\, S_i\, H_i^{\frac{1}{2}} =h_{\frac{1}{2}}(H_i)\, h_{-1}(H_i)\, h_{\frac{1}{2}}(H_i) =  (h_{\frac{1}{2}} h_{-1} h_{\frac{1}{2}})(H_i) = \rP{H_i}.$$ 

Let $\alpha:= \min\limits_{1 \le i \le k}\{\|S_i^{-1}\|^{-1}\}$, and $\beta:= \max\limits_{1 \le i \le k}\{\|S_i\|\}$. Then, observe that
$$ \alpha I ~\le~ S_i ~\le~ \beta I ~\;~ ~;~ 1 \le i \le k.$$

It follows from (\ref{eqn:OI3}) that,
$$\alpha H_i ~\le~  H_i^\frac{1}{2}S_iH_i^\frac{1}{2} = \rP{H_i} ~\le~ \beta  H_i ~\;~ ~;~ 1 \le i \le k. $$

Since $a_{i,n}$'s are non-negative real numbers, for each $n \in \N$, we have
$$\alpha \sum_{i=1}^k a_{i,n}~ H_i ~\le~ \sum_{i=1}^ka_{i,n}~ \rP{H_i} ~\le~ \beta \sum_{i=1}^ka_{i,n}~ H_i.$$

It follows from (\ref{eqn:OI4}) that,
$$\alpha^\frac{1}{n}\big( \sum_{i=1}^k a_{i,n}~ H_i\big)^\frac{1}{n} ~\le~ \big(\sum_{i=1}^ka_{i,n}~ \rP{H_i} \big)^\frac{1}{n} ~\le~ \beta^\frac{1}{n} \big(\sum_{i=1}^ka_{i,n}~ H_i\big)^\frac{1}{n},$$

and by a simple algebraic manipulation,
$$\beta ^{-\frac{1}{n}} \big(\sum_{i=1}^ka_{i,n}~ \rP{H_i} \big)^\frac{1}{n} ~\le~ \big( \sum_{i=1}^k a_{i,n}~ H_i\big)^\frac{1}{n} ~\le~ \alpha^{-\frac{1}{n}} \big(\sum_{i=1}^ka_{i,n}~ \rP{H_i} \big)^\frac{1}{n}.$$

Since $0 < \alpha < \beta $, we have $\lim_{n \to \infty} \alpha ^{\frac{1}{n}} = \lim_{n \to \infty} \beta ^{\frac{1}{n}} = 1$, and the result follows from the sandwich theorem for limits.
\end{proof}

\begin{prop}
\label{thm:key1}
\textsl{
Let $a_1 < \cdots < a_k$ be non-negative real numbers and $H_1, \ldots, H_k$ be positive operators in $\kB(\kH)$ such that for each $ 1 \le i \le k$, the reverse cumulative sum, $\sum_{j=i}^k H_j$, is a closed-range positive operator. Define $G_i := \rP{\sum_{j=i}^k H_j}$ for $1 \le i \le k$, with the convention that $G_{k+1} := 0$. Then,
\begin{align*}
     \lim_{n \to \infty} \big(\sum_{i=1}^k a_i^n~ H_i\big)^\frac{1}{n}
     =& \sum_{i=1}^k a_i (G_i - G_{i+1}), \textrm{ in norm}.
\end{align*}
}
\end{prop}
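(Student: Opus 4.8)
The plan is to use a summation-by-parts (Abel summation) maneuver to rewrite $\sum_{i=1}^k a_i^n H_i$ as a combination, with \emph{non-negative} coefficients, of the reverse cumulative sums $P_i := \sum_{j=i}^k H_j$, which are exactly the closed-range positive operators furnished by the hypothesis. Writing $P_{k+1} := 0$ so that $H_i = P_i - P_{i+1}$, a telescoping rearrangement gives
\begin{equation*}
\sum_{i=1}^k a_i^n H_i = a_1^n P_1 + \sum_{i=2}^k (a_i^n - a_{i-1}^n)\, P_i .
\end{equation*}
Because $0 \le a_1 < \cdots < a_k$, each coefficient $a_1^n$ and $a_i^n - a_{i-1}^n$ is non-negative, so this is precisely the shape of expression handled by Lemma \ref{lem:convergence_of_nth_roots_of_sp_sequence_of_positive_operators}.

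First I would record that the projections $G_i = \rP{P_i}$ form a decreasing chain $G_1 \ge G_2 \ge \cdots \ge G_k \ge G_{k+1} = 0$: since $P_i = H_i + P_{i+1} \ge P_{i+1} \ge 0$, Lemma \ref{lem:ran_order_pres} yields $G_{i+1} \le G_i$. In particular the $G_i$ mutually commute, so the differences $Q_i := G_i - G_{i+1}$ are mutually orthogonal projections with $G_i = \sum_{j=i}^k Q_j$. Applying Lemma \ref{lem:convergence_of_nth_roots_of_sp_sequence_of_positive_operators} to the closed-range operators $P_1, \ldots, P_k$ with coefficient sequences $a_{1,n} = a_1^n$ and $a_{i,n} = a_i^n - a_{i-1}^n$ (for $2 \le i \le k$), the norm-limit of $\big(\sum_i a_i^n H_i\big)^{1/n}$ exists if and only if that of $\big(a_1^n G_1 + \sum_{i=2}^k (a_i^n - a_{i-1}^n) G_i\big)^{1/n}$ does, and the two limits agree.

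Reversing the telescoping with $G_i$ in place of $P_i$ and substituting $G_i = \sum_{j \ge i} Q_j$, the operator inside the second root simplifies to $\sum_{j=1}^k a_j^n Q_j$. Since the $Q_j$ are mutually orthogonal projections, the positive operator $X := \sum_{j=1}^k a_j Q_j$ satisfies $X^n = \sum_{j=1}^k a_j^n Q_j$ (all cross terms vanish), so by uniqueness of positive $n$-th roots we get $\big(\sum_j a_j^n Q_j\big)^{1/n} = X$ for every $n$. This sequence is constant, hence trivially norm-convergent, with limit $X = \sum_{j=1}^k a_j (G_j - G_{j+1})$, which is the desired expression.

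The main obstacle — really the only non-routine point — is recognizing that the hypothesis on the reverse cumulative sums is exactly what is needed to invoke Lemma \ref{lem:convergence_of_nth_roots_of_sp_sequence_of_positive_operators}, and that the Abel summation produces non-negative coefficients precisely because the $a_i$ are strictly increasing. Once the problem has been transported to the commuting projections $G_i$, the computation of the $n$-th root collapses to an elementary orthogonality argument, and no delicate estimate is required.
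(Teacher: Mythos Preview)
Your proof is correct and follows essentially the same route as the paper: Abel summation to rewrite $\sum_i a_i^n H_i$ in terms of the reverse cumulative sums with non-negative coefficients, invocation of Lemma~\ref{lem:convergence_of_nth_roots_of_sp_sequence_of_positive_operators} to replace those sums by their range projections $G_i$, and then the observation that the resulting expression is exactly the $n$-th power of $\sum_i a_i(G_i - G_{i+1})$ by orthogonality of the $G_i - G_{i+1}$. The only cosmetic differences are notation ($P_i$ versus the paper's $K_i$) and the indexing convention in the Abel sum.
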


\begin{proof}


\noindent For $1 \le i \le k$, let us denote the reverse cumulative sums by,
$$K_i := \sum_{j=i}^k H_j,$$
so that $G_i = \rP{K_i}$ for $1 \le i \le k$, and $G_{k+1} = 0$ as stipulated in the hypothesis of the theorem. Since $K_1 \ge K_2 \ge \cdots \ge K_k$, using Lemma \ref{lem:ran_order_pres}, we have 
$$G_1 \ge G_2 \ge \cdots \ge G_k .$$

Thus $\{G_i- G_{i+1} : 1 \le i \le k \}$ consists of mutually orthogonal projections, and using Abel's summation by parts, we have
\begin{align*}
     \Big( \sum_{i=1}^{k} a_i \big(G_i -  G_{i+1} \big)\Big)^n
   = ~\sum_{i=1}^{k} a_i^n~ \big(G_i - G_{i+1}\big) =~ a_1^n ~ G_1 + \sum_{i=1}^{k-1} ~(a_{i+1}^n - a_{i}^n) ~ G_{i+1},
\end{align*}

Hence, for all $n \in \N$, we have
\begin{equation}\label{eqn:lim_simplified}
    \Big( a_1^n ~ G_1 + \sum_{i=1}^{k-1} ~(a_{i+1}^n - a_{i}^n) ~ G_{i+1} \Big)^\frac{1}{n} = ~ \sum_{i=1}^{k} a_i \big(G_i -G_{i+1}\big).
\end{equation}

Again using Abel's summation by parts, we get
\begin{equation}
\label{eqn:abel_sum_for_sp_sequence_of_positive_operators}
\sum_{i=1}^k a_i^n~ H_i ~=~ a_1^n~ K_1 + \sum_{i=1}^{k-1}~ (a_{i+1}^n - a_i^n)~K_{i+1} ~.
\end{equation}

Since $\{a_1^n\}_{n \in \N}, \{a_2^n-a_1^n\}_{n \in \N}, \ldots, \{a_k^n-a_{k-1}^n\}_{n \in \N}$ are sequences of non-negative real numbers, and by our hypothesis $K_i$'s are positive closed-range operators, using Lemma \ref{lem:convergence_of_nth_roots_of_sp_sequence_of_positive_operators} and equation (\ref{eqn:lim_simplified}),
\begin{align*}
\lim_{n \to \infty} \big(\sum_{i=1}^k a_i^n~ H_i\big)^{\frac{1}{n}} &= \lim_{n \to \infty} \big(a_1^n~ K_1 + \sum_{i=1}^{k-1} ~(a_{i+1}^n - a_i^n)~K_{i+1}\big)^{\frac{1}{n}} \\
&= \lim_{n \to \infty} \big(a_1^n~ G_1 + \sum_{i=1}^{k-1} ~(a_{i+1}^n - a_i^n)~G_{i+1}\big)^{\frac{1}{n}} \\
& = \sum_{i=1}^{k} a_i \big(G_i -G_{i+1}\big),
\end{align*}
where the above limits are taken in the norm topology.
\end{proof}

\begin{cor}\label{cor:k_projections}
\textsl{
Let $a_1 < \ldots < a_k$ be non-negative real numbers. Let $E_1, \ldots, E_k \in \kB (\kH)$ be mutually orthogonal projections such that $E_1 + \cdots + E_k = I$, and $S$ be an invertible operator in $\kB(\kH)$. For $1 \le i \le k$, let  $F_i := \rP{S^{-1} (\sum_{j=1}^i E_j)S}$ with $F_0 := 0$.
then,
$$\lim_{n \to \infty} \big( \sum_{i=1}^k a_i ^n~ S^* E_i S \big)^{\frac{1}{n}}= \sum_{i=1}^k a_i (F_i - F_{i-1}), ~\;~ \textrm{ in norm.}$$
}
\end{cor}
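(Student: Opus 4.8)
The plan is to deduce this directly from Proposition \ref{thm:key1} by choosing $H_i := S^* E_i S$ for $1 \le i \le k$, and then to convert the resulting expression into the stated form using Lemma \ref{lem:rearrangment}. First I would note that each $H_i$ is positive, since $E_i$ is a projection and hence $S^* E_i S = (E_i S)^*(E_i S) \ge 0$. To verify the hypothesis of Proposition \ref{thm:key1}, I would observe that the reverse cumulative sums factor through a projection: because $E_1, \ldots, E_k$ are mutually orthogonal projections, $\sum_{j=i}^k E_j$ is itself a projection, and therefore
$$\sum_{j=i}^k H_j = S^* \Big( \sum_{j=i}^k E_j \Big) S$$
is a closed-range positive operator by Remark \ref{rmrk:range_idem}.

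With the hypotheses confirmed, Proposition \ref{thm:key1} gives
$$\lim_{n \to \infty} \Big( \sum_{i=1}^k a_i^n\, S^* E_i S \Big)^{\frac{1}{n}} = \sum_{i=1}^k a_i (G_i - G_{i+1}), \quad \text{in norm},$$
where $G_i := \rP{S^* (\sum_{j=i}^k E_j) S}$ and $G_{k+1} := 0$. It then remains to identify this limit with the claimed expression $\sum_{i=1}^k a_i (F_i - F_{i-1})$. The bridge is Lemma \ref{lem:rearrangment}, applied to the projection $\sum_{j=i}^k E_j = I - \sum_{j=1}^{i-1} E_j$, which yields
$$G_i = \rP{S^* \Big( I - \sum_{j=1}^{i-1} E_j \Big) S} = I - \rP{S^{-1} \Big( \sum_{j=1}^{i-1} E_j \Big) S} = I - F_{i-1}$$
for $1 \le i \le k$. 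I would also check the boundary cases so that this relation persists to $i = k+1$: since $\sum_{j=1}^k E_j = I$ we have $F_k = \rP{S^{-1} I S} = I$, so $I - F_k = 0 = G_{k+1}$, consistent with the stated convention, and $F_0 = 0$ matches the empty sum.

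Finally, I would substitute $G_i = I - F_{i-1}$ and telescope:
$$\sum_{i=1}^k a_i (G_i - G_{i+1}) = \sum_{i=1}^k a_i \big( (I - F_{i-1}) - (I - F_i) \big) = \sum_{i=1}^k a_i (F_i - F_{i-1}),$$
which is exactly the asserted limit. There is no genuine analytic difficulty beyond invoking Proposition \ref{thm:key1}; the step requiring the most care is the conversion of the reverse cumulative projections $G_i$ into the forward cumulative projections $F_i$ via Lemma \ref{lem:rearrangment}, where one must keep precise track of the index shift and of the interchange between $S^*$ on the left-hand side and $S^{-1}$ in the definition of $F_i$.
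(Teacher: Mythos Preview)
Your proposal is correct and follows essentially the same route as the paper: apply Proposition~\ref{thm:key1} with $H_i = S^*E_iS$, use Remark~\ref{rmrk:range_idem} to verify the closed-range hypothesis on the reverse cumulative sums, and then invoke Lemma~\ref{lem:rearrangment} to rewrite $G_i = I - F_{i-1}$ so that $G_i - G_{i+1} = F_i - F_{i-1}$. The only difference is presentational order, and your explicit handling of the boundary cases $F_0 = 0$ and $F_k = I$ is, if anything, slightly more detailed than the paper's.
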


\begin{proof}
Clearly for each $1 \le i \le k$, the operator $S^*E_i S$ is positive, and $\sum_{j=i}^k E_j$ is a projection, being a sum of mutually orthogonal projections. From Remark \ref{rmrk:range_idem}, the reverse cumulative sum $\sum_{j=i}^k S^*E_j S = S^*(\sum_{j=i}^k E_j)S$, is a positive closed-range operator.

Let $G_i := \rP{S^*(\sum_{j=i}^k E_j)S }$ for $1 \le i \le k$, with $G_{k+1} := 0$. From Lemma \ref{lem:rearrangment}, for $1 \le i \le k$ we observe that
$$ 
G_i =  I - \rP{S^{-1} \big( I - \sum_{j=i}^{k} E_j \big) S} = I - \rP{S^{-1}\big(\sum_{j=1}^{i-1} E_j\big)S} = I - F_{i-1}.
$$

Hence, keeping in mind that $F_k = I$, we have $G_{i} - G_{i+1} = F_{i} - F_{i-1}$, for each $1 \le i \le k$. The result then follows from Theorem \ref{thm:key1}.
\end{proof}

\begin{lem}\label{lem:spectral_resolution_of_the_limit}
\textsl{
Let $\{E_\lambda\}_{\lambda \in \R}$ be a bounded resolution of the identity on a Hilbert space $\kH$, and $S$ be an invertible operator in $\kB(\kH)$. Then the family $\big\{\rP{S^{-1} E_\lambda S}\big\}_{\lambda \in \R}$ defines a bounded resolution of the identity on $\kH$.
}
\end{lem}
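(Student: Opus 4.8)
The plan is to verify the three defining conditions of Definition \ref{def:resolution_of_the_identity} together with boundedness, all at the level of ranges. Write $F_\lambda := \rP{S^{-1}E_\lambda S}$. The first task is to identify the range of $F_\lambda$ explicitly. Since each $E_\lambda$ is a projection, Remark \ref{rmrk:range_idem} (applied with the invertible operators $S^{-1}$ and $S$ flanking the projection $E_\lambda$) gives that $S^{-1}E_\lambda S$ has closed range and $\ran(S^{-1}E_\lambda S) = S^{-1}\ran(E_\lambda)$. Because $S^{-1}$ is a bounded invertible operator and $\ran(E_\lambda)$ is closed, $S^{-1}\ran(E_\lambda)$ is a closed subspace; hence $F_\lambda$ is precisely the orthogonal projection onto $S^{-1}\ran(E_\lambda)$. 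With this description in hand, all the verifications reduce to elementary manipulations of the closed subspaces $S^{-1}\ran(E_\lambda)$, using that $S^{-1}$ is a bijection of $\kH$.

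Boundedness and condition (i) come essentially for free. Let $a \in \R$ be a constant witnessing boundedness of $\{E_\lambda\}$, so that $E_\lambda = 0$ for $\lambda < -a$ and $E_\lambda = I$ for $\lambda > a$. Then $S^{-1}E_\lambda S = 0$ for $\lambda < -a$ and $S^{-1}E_\lambda S = I$ for $\lambda > a$, whence $F_\lambda = 0$ and $F_\lambda = I$ respectively. Thus $\{F_\lambda\}$ is bounded with the same constant $a$, and condition (i) follows, since $\bigwedge_\lambda F_\lambda \le F_{-a-1} = 0$ and $\bigvee_\lambda F_\lambda \ge F_{a+1} = I$.

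For monotonicity (ii), if $\lambda \le \lambda'$ then $\ran(E_\lambda) \subseteq \ran(E_{\lambda'})$, and applying the bijection $S^{-1}$ preserves inclusion, so $\ran(F_\lambda) = S^{-1}\ran(E_\lambda) \subseteq S^{-1}\ran(E_{\lambda'}) = \ran(F_{\lambda'})$, that is, $F_\lambda \le F_{\lambda'}$. The only step requiring a little care is the right-continuity condition (iii). Here I would use that the meet of a family of projections is the projection onto the intersection of their ranges, together with the fact that the bijection $S^{-1}$ commutes with arbitrary intersections, $S^{-1}\big(\bigcap_\alpha V_\alpha\big) = \bigcap_\alpha S^{-1}(V_\alpha)$. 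Combining these,
$$\ran\Big(\bigwedge_{\lambda' > \lambda} F_{\lambda'}\Big) = \bigcap_{\lambda' > \lambda} \ran(F_{\lambda'}) = \bigcap_{\lambda' > \lambda} S^{-1}\ran(E_{\lambda'}) = S^{-1}\Big(\bigcap_{\lambda' > \lambda}\ran(E_{\lambda'})\Big).$$
By condition (iii) for $\{E_\lambda\}$, the intersection $\bigcap_{\lambda' > \lambda}\ran(E_{\lambda'})$ equals $\ran(E_\lambda)$, so the right-hand side is $S^{-1}\ran(E_\lambda) = \ran(F_\lambda)$. Hence $\bigwedge_{\lambda' > \lambda}F_{\lambda'} = F_\lambda$, which is exactly condition (iii).

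The main (though modest) obstacle is the bookkeeping in (iii): translating the lattice-theoretic meet into an intersection of ranges and justifying that the invertible $S^{-1}$ passes through that intersection. Everything else is immediate from the range identity $\ran(F_\lambda) = S^{-1}\ran(E_\lambda)$ furnished by Remark \ref{rmrk:range_idem}.
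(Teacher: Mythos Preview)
Your proof is correct and takes essentially the same approach as the paper's: identify $\ran(F_\lambda) = S^{-1}\ran(E_\lambda)$ via Remark~\ref{rmrk:range_idem} and verify the three axioms and boundedness at the level of ranges. The paper adds a small organizational detour, first treating the positive family $\rP{S_0^{*} E_\lambda S_0}$ with $S_0 := (S^{-1})^{*}$ (so that Lemma~\ref{lem:ran_order_pres} can be invoked for monotonicity) and only afterward identifying it with $\{F_\lambda\}$ via Remark~\ref{rmrk:range_idem}, whereas you argue directly from the range description throughout; your route is slightly more streamlined.
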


\begin{proof}
Let $S_0$ be an invertible operator in $\kB(\kH)$. For $\lambda \in \R$, let $F_{\lambda} := \rP{S_0 ^* E_\lambda S_0}$. We first show that $\{F_{\lambda} \}_{\lambda \in \R}$ is a bounded resolution of the identity. From Remark \ref{rmrk:range_idem}, $F_{\lambda} = \rP{S_0^* E_{\lambda}}$. Since $S_0^*$ is invertible, note that $S_0^*~ \ran(E_{\lambda})$ is a closed subspace of $\kH$ and $\ran(F_{\lambda}) = S_0 ^* ~\ran (E_{\lambda})$.
\begin{enumerate}
    \item[(i)] Since $\bigcap_{\lambda \in \R} \ran(E_{\lambda}) = \{ 0 \}_{\kH}$ and $\bigcup_{\lambda \in \R} \ran(E_{\lambda})$ is dense in $\kH$, from the invertibility of $S_0 ^*$ we have
    \begin{align*}
    \bigcap_{\lambda \in \R} \ran (F_{\lambda}) &= S_0^*  \mathsmaller{\bigcap}_{\lambda \in \R} \ran (E_{\lambda}) \big) = \{ 0 \}_{\kH},\\
    \bigcup_{\lambda \in \R} \ran(F_{\lambda}) &= S_0^* \big( \mathsmaller{\bigcup}_{\lambda \in \R} \ran(E_{\lambda}) \big)  \textrm{ is dense in } \kH.\end{align*}
    Thus, $\bigwedge_{\lambda \in \R} F_{\lambda} ~=~ 0$, and $\bigvee_{\lambda \in \R} F_\lambda = I$.
    
    \item[(ii)] If $\lambda \le \lambda'$, then \begin{align*}
    E_\lambda ~\le~ E_{\lambda'} & \implies S_0^* E_\lambda S_0 ~\le~ S_0^* E_{\lambda'}S_0 &\textrm{ by inequality (\ref{eqn:OI3})}\\
    &\implies \rP{S_0 ^* E_\lambda S_0} ~\le~  \rP{S_0 ^* E_{\lambda'}S_0} &\textrm{ by Lemma \ref{lem:ran_order_pres}}\\
    &\implies F_{\lambda} ~\le~ F_{\lambda '}.
    \end{align*}
    
    \item[(iii)] Since $E_{\lambda} = \bigwedge_{\lambda' > \lambda} E_{\lambda'}$, we observe that $\ran(E_{\lambda}) = \bigcap_{\lambda ' > \lambda} \ran(E_{\lambda '})$. Thus,
    $$\bigcap_{\lambda ' > \lambda } \ran(F_{\lambda '}) = S_0^* \big(\mathsmaller{\bigcap}_{\lambda ' > \lambda } \ran(E_{\lambda'})\big) = S_0^* ~ \ran(E_{\lambda}) = \ran(F_{\lambda}),$$
    which implies that $\bigwedge_{\lambda ' > \lambda} F_{\lambda'} = F_{\lambda}$.
\end{enumerate}
From the above, we conclude that $\{ F_{\lambda} \}_{\lambda \in \R}$ is a resolution of the identity. Note that $F_\lambda = 0, F_{\lambda} = I$, respectively, if and only if $E_\lambda = 0, E_{\lambda} = I$, respectively. Thus $\{F_\lambda\}_{\lambda \in \R}$ is a bounded resolution of the identity.

Let us choose $S_0$ to be the invertible operator $(S^{-1})^*$. Using Remark \ref{rmrk:range_idem}, we have
$$ \rP{S^{-1} E_{\lambda} S} = \rP{S^{-1} E_\lambda } = \rP{S_0^* E_{\lambda} }  = \rP{S_0 ^* E_{\lambda} S_0}.$$ 
From the preceding discussion, we conclude that $\big\{\rP{S^{-1} E_\lambda S}\big\}_{\lambda \in \R}$ defines a bounded resolution of the identity on $\kH$.
\end{proof}

\begin{thm}
\label{thm:application_of_spectral_theorem}
\textsl{
Let $H$ be a positive operator in $\kB(\kH)$ with spectral resolution $\{ E_{\lambda} \}_{\lambda \in \R}$, and $S$ be an invertible operator in $\kB(\kH)$. For $\lambda \in \R$, let $F_{\lambda} := R (S^{-1}E_{\lambda}S).$ Then $\{ F_{\lambda} \}_{\lambda \in \R}$ is a bounded resolution of the identity and
$$\lim_{n \to \infty} (S^* H^n S)^{\frac{1}{n}} = \int_{0}^{\|H\|} \lambda \; \mathrm{d}F_{\lambda},\textrm{ in norm}.$$
In addition, the spectrum of the limiting positive operator, $\int_{0}^{\|H\|} \lambda \; \mathrm{d}F_{\lambda}$, coincides with the spectrum of $H$.
} 
\end{thm}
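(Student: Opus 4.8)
The plan is to prove the three assertions in turn, concentrating the real work on the norm-convergence. The first assertion is immediate: since $H$ is self-adjoint, its spectral resolution $\{E_\lambda\}_{\lambda \in \R}$ is a bounded resolution of the identity by Theorem \ref{thm:resolution_of_the_identity}, so Lemma \ref{lem:spectral_resolution_of_the_limit}, applied to $\{E_\lambda\}$ and $S$, shows at once that $\{F_\lambda\}_{\lambda\in\R} = \{\rP{S^{-1}E_\lambda S}\}_{\lambda\in\R}$ is a bounded resolution of the identity. In particular $F_\lambda = 0$ for $\lambda < 0$ and $F_\lambda = I$ for $\lambda \ge \|H\|$ (using, as in the proof of that lemma, that $F_\lambda = 0 \Leftrightarrow E_\lambda = 0$ and $F_\lambda = I \Leftrightarrow E_\lambda = I$), so that $J := \int_0^{\|H\|}\lambda\,\ud F_\lambda$ is a well-defined positive operator whose spectral resolution is, by uniqueness in Theorem \ref{thm:resolution_of_the_identity}, precisely $\{F_\lambda\}$.

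For the convergence I would first settle the case where $H$ has finite spectrum, which is exactly Corollary \ref{cor:k_projections}: writing $H = \sum_{i=1}^k a_i P_i$ with $a_1 < \cdots < a_k$ its eigenvalues and $P_i$ the corresponding mutually orthogonal spectral projections, one has $S^* H^n S = \sum_i a_i^n\, S^* P_i S$, and the Corollary yields $\lim_n (S^*H^nS)^{\frac{1}{n}} = \sum_i a_i(F_{a_i} - F_{a_{i-1}}) = \int_0^{\|H\|}\lambda\,\ud F_\lambda$. The general case is then reduced to this by a sandwiching argument. Fix $\epsilon > 0$ and a partition $0 = t_0 < t_1 < \cdots < t_m = \|H\|$ of mesh less than $\epsilon$, and set
\begin{equation*}
H_- := \sum_{i=1}^m t_{i-1}(E_{t_i} - E_{t_{i-1}}), \qquad H_+ := \sum_{i=1}^m t_i(E_{t_i} - E_{t_{i-1}}),
\end{equation*}
the lower and upper step-function approximations of $H$ furnished by the spectral theorem. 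These are finite-spectrum positive operators, all functions of $H$, satisfying $0 \le H_- \le H \le H_+$ and $\|H_+ - H_-\| < \epsilon$.

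Since $H_-, H, H_+$ mutually commute, the order $0 \le H_- \le H \le H_+$ is preserved by $x \mapsto x^n$ (via the continuous function calculus in the commutative $C^*$-algebra they generate), so $H_-^n \le H^n \le H_+^n$; applying (\ref{eqn:OI3}) and then (\ref{eqn:OI4}) gives, for every $n$,
\begin{equation*}
(S^*H_-^n S)^{\frac{1}{n}} ~\le~ (S^*H^n S)^{\frac{1}{n}} ~\le~ (S^*H_+^n S)^{\frac{1}{n}}.
\end{equation*}
By the finite-spectrum case the two outer sequences converge in norm to the Riemann--Stieltjes sums $L_- := \sum_{i} t_{i-1}(F_{t_i} - F_{t_{i-1}})$ and $L_+ := \sum_{i} t_i(F_{t_i} - F_{t_{i-1}})$. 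As the $F_{t_i} - F_{t_{i-1}}$ are mutually orthogonal projections summing to $I$, we have $0 \le L_- \le J \le L_+$ with $\|L_+ - L_-\| = \|\sum_i (t_i - t_{i-1})(F_{t_i}-F_{t_{i-1}})\| < \epsilon$, hence $\|J - L_\pm\| < \epsilon$. Now $(S^*H^nS)^{\frac{1}{n}} - J$ is self-adjoint and is trapped between $(S^*H_-^nS)^{\frac{1}{n}} - J$ and $(S^*H_+^nS)^{\frac{1}{n}} - J$; for self-adjoint operators $A \le B \le C$ one has $\|B\| \le \max(\|A\|,\|C\|)$ (by (\ref{eqn:OI1}) and evaluation on unit vectors), so
\begin{equation*}
\big\|(S^*H^nS)^{\frac{1}{n}} - J\big\| ~\le~ \max\!\Big(\big\|(S^*H_-^nS)^{\frac{1}{n}} - J\big\|,~ \big\|(S^*H_+^nS)^{\frac{1}{n}} - J\big\|\Big).
\end{equation*}
Letting $n \to \infty$ bounds $\limsup_n \|(S^*H^nS)^{\frac{1}{n}} - J\|$ by $\max(\|L_- - J\|, \|L_+ - J\|) < \epsilon$, and since $\epsilon$ was arbitrary, $(S^*H^nS)^{\frac{1}{n}} \to J$ in norm. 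This sandwich is the main obstacle: $n$-th roots interact badly with limits and differences of operators, so rather than estimate the roots directly one traps them between two convergent finite-spectrum sequences whose limits are already understood and can be forced arbitrarily close.

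Finally, for the spectrum, recall from the proof of Lemma \ref{lem:spectral_resolution_of_the_limit} that $\ran(F_\lambda) = S^{-1}\,\ran(E_\lambda)$. Since $S^{-1}$ is a bijection, for $\lambda \le \lambda'$ we have $F_\lambda = F_{\lambda'}$ if and only if $\ran(E_\lambda) = \ran(E_{\lambda'})$, that is, $E_\lambda = E_{\lambda'}$; thus $\{F_\lambda\}$ is locally constant near $\lambda_0$ exactly when $\{E_\lambda\}$ is. Applying the spectral characterization in Theorem \ref{thm:resolution_of_the_identity} to $J$ with resolution $\{F_\lambda\}$ and to $H$ with $\{E_\lambda\}$, a point $\lambda_0$ lies outside $\spec{J}$ iff $\{F_\lambda\}$ is constant in a neighbourhood of $\lambda_0$, iff $\{E_\lambda\}$ is, iff $\lambda_0 \notin \spec{H}$. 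Hence $\spec{J} = \spec{H}$.
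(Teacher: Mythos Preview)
Your proof is correct and follows essentially the same approach as the paper's: both invoke Lemma~\ref{lem:spectral_resolution_of_the_limit} for the resolution of the identity, both reduce the convergence to the finite-spectrum case (Corollary~\ref{cor:k_projections}) via lower and upper step-function approximations $H_-\le H\le H_+$, and both sandwich $(S^*H^nS)^{1/n}$ between the corresponding finite-spectrum sequences before passing to the limit; the spectrum argument via local constancy of $E_\lambda$ versus $F_\lambda$ is likewise identical. The only differences are cosmetic: the paper normalizes to $\|H\|=1$ and uses uniform partitions, and it finishes the sandwich by deriving $-\varepsilon I \le T_n - K \le \varepsilon I$ directly from~(\ref{eqn:OI1}), whereas you use the equivalent observation that $A\le B\le C$ for self-adjoint operators forces $\|B\|\le\max(\|A\|,\|C\|)$.
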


\begin{proof}
From Lemma \ref{lem:spectral_resolution_of_the_limit}, $\{F_\lambda\}_{\lambda \in \R}$ is a bounded resolution of the identity. Since $F_\lambda - F_{\lambda'} = 0$ if and only if $E_\lambda - E_{\lambda'} = 0$, from Theorem \ref{thm:spec_res} it follows that the spectrum of the positive operator $\int_{0}^{\|H\|} \lambda \; \mathrm{d}F_{\lambda}$, coincides with the spectrum of $H$.

Without loss of generality (by replacing $H$ with $H/\|H\|$ if necessary), we may assume that $\|H\|=1$, so that $\spec{H} \subseteq [0, 1]$. Below we set up some notation. Let $K := \int_{0}^{\|H\|} \lambda \; dF_{\lambda}$. For $m \in \N$, we partition $[0,1]$ uniformly into intervals $\left[\frac{i-1}{m}, \frac{i}{m}\right] ~;~ 1 \le i \le m$, and let $E_{i,m}$ and $F_{i,m}$ denote the spectral projections for $H$ and $K$, respectively, corresponding to the interval $\left[0, \frac{i}{m}\right]$. The lower and upper approximations to $H$ are defined as follows :
$$H_m := \sum_{i=1}^{m} \tfrac{i-1}{m} (E_{i,m} - E_{i-1,m})~,~\;~ H'_m := \sum_{i=1}^{m} \tfrac{i}{m} (E_{i,m} - E_{i-1,m}).$$
\noindent Similarly, the lower and upper approximations to $K$ are defined as follows :
$$K_{m} := \sum_{i=1}^{m} \tfrac{i-1}{m} \left( F_{i,m} - F_{i-1,m} \right)~,~\;~ K_m' := \sum_{i=1}^{m} \tfrac{i}{m} \left( F_{i,m} - F_{i-1,m} \right).$$

Our main interest is in the norm-convergence of the sequence, $T_n := (S^*H^nS)^{\frac{1}{n}}$. The approximation argument involves lower and upper approximations of $T_n$ by 
$$T_{m,n} := (S^*H_m^nS)^{\frac{1}{n}}~,~\;~ T'_{m,n} := (S^*{H'_m}^nS)^{\frac{1}{n}}, \textrm{ for } m,n \in \N.$$

By Corollary \ref{cor:k_projections}, for every $m \in \N$, we have 
$$\lim_{n \to \infty} T_{m,n} = K_m ~,~\;~ \lim_{n \to \infty} T'_{m,n} = K'_m .$$

Given $\varepsilon > 0$, fix a positive integer $m_{\varepsilon}$ such that $\frac{1}{m_{\varepsilon}} < \frac{\varepsilon}{2}$. There is an $n(\ep) \in \N$ such for all $n \ge n(\ep)$, we have 
\begin{align*}
\|T_{m_{\varepsilon}, n} - K_{m_{\varepsilon}}\| < \frac{\varepsilon}{2}~,~\;~ \|T'_{m_{\varepsilon}, n} - K'_{m_{\varepsilon}}\| < \frac{\varepsilon}{2}.
\end{align*}

It follows that,
\begin{align}\label{eqn:appx1}
-\tfrac{\varepsilon}{2} I ~\le~ T_{m_{\varepsilon}, n} - K_{m_{\varepsilon}} ~\le~  \tfrac{\varepsilon}{2} I ~,~\;~ 
-\tfrac{\varepsilon}{2}I ~\le~ T'_{m_{\varepsilon}, n} - K'_{m_{\varepsilon}} ~\le~  \tfrac{\varepsilon}{2} I \quad \forall ~ n \ge n(\ep).
\end{align}

Since $H_m^n \le H^n \le {H'_m}^n\;$ for all $m, n \in \N$, we have
\begin{align*}
\phantom{\Longrightarrow}& S^* H_m ^n S ~\le~ S^* H^n S ~\le~ S^* {H'_m}^n S, &\textrm{ by inequality (\ref{eqn:OI3})}\\
\Longrightarrow \hspace{.5cm}& T_{m,n} ~\le~ T_n ~\le~ T'_{m,n}. &\textrm{ by inequality (\ref{eqn:OI4})}
\end{align*}

Combining with (\ref{eqn:appx1}), we see that for all $n \ge n(\ep)$,
\begin{align*}
\phantom{\Longrightarrow}&-\tfrac{\ep}{2}I + K_{m_{\ep}} ~\le~ T_{m_{\ep}, n} ~\le~ T_n ~\le~ T'_{m_{\ep}, n} ~\le~ K'_{m_{\ep}} + \tfrac{\ep}{2}I,\\
\Longrightarrow \hspace{.5cm}&-\tfrac{\ep}{2}I +K_{m_{\ep}} - K ~\le~ T_n - K ~\le~ K'_{m_{\ep}} - K + \tfrac{\ep}{2}I.
\end{align*}

Since $\|K_{m_{\ep}} - K\| \le \frac{1}{{m_{\ep}}} \le \frac{\ep}{2}$ and $\|K'_{m_{\ep}} - K \| \le \frac{1}{{m_{\ep}}} \le \frac{\ep}{2}$, for all $n \ge n(\ep)$ we have,
$$-\ep I ~\le~ T_n  - K ~\le~ \ep I.$$

In summary, for every $\varepsilon > 0$, there is a positive integer $n(\ep)$ such that for all $n \ge n(\ep)$, we have $\|T_n - K\| \le \ep$. Thus, $\lim_{n \to \infty} T_n = \lim_{n \to \infty} (S^*H^nS)^{\frac{1}{n}} = K.$
\end{proof}

\section{Norm convergence of the normalized power sequence of spectral operators}

In this section, we prove the main result of this article, which asserts the norm-convergence of the normalized power sequence of a spectral operator, providing an explicit description of the limiting positive operator in terms of its idempotent-valued spectral resolution. We remind the reader that for $r \ge 0$, $\D_{r}$ denotes the disc of radius $r$ in $\C$ centered at the origin, and for $r < 0$, we stipulate that $\D_r = \varnothing$; in addition, for $r \in \R$, $\mathbb{H}_r$ denotes the closed half-plane of complex numbers with real part less than or equal to $r$. 

\begin{lem}\label{lem:limit_equivalence_for_spectranveshi_sequence}
\textsl{
Let $T \in \kB(\kH)$ and $S$ be an invertible operator in $\kB(\kH)$. Then the normalized power sequence of $S^{-1}TS$ converges in norm if and only if the sequence, $\{ (S^*{T^n}^*T^n S)^\frac{1}{2n} \}_{n \in \N}$, converges in norm. Moreover, when the limits exist, they coincide with each other.
}
\end{lem}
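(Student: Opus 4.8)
The plan is to reduce the comparison of the two sequences to a scalar sandwich governed by the positive invertible operator $W := SS^*$. First I would record the elementary algebraic identity behind the statement. Since $(S^{-1}TS)^n = S^{-1}T^nS$, the $n$-th term of the normalized power sequence of $S^{-1}TS$ is
$$A_n := \big| S^{-1}T^nS \big|^{\frac{1}{n}} = \Big( S^*(T^n)^* (SS^*)^{-1} T^n S \Big)^{\frac{1}{2n}},$$
using $(S^{-1})^*S^{-1} = (SS^*)^{-1}$. Writing $B_n := \big(S^*(T^n)^*T^nS\big)^{\frac{1}{2n}}$ for the $n$-th term of the second sequence, the only structural difference between $A_n$ and $B_n$ is the insertion of $W^{-1} = (SS^*)^{-1}$ between $(T^n)^*$ and $T^n$.

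Next I would squeeze $W^{-1}$ between scalars. As $W$ is positive and invertible, inequality (\ref{eqn:OI2}) gives $\|W\|^{-1} I \le W \le \|W\| I$, hence $c\,I \le W^{-1} \le C\,I$ with $c := \|W\|^{-1}$ and $C := \|W^{-1}\|$, where $0 < c \le C$. Applying inequality (\ref{eqn:OI3}) first with the operator $T^n$ and then with $S$, I obtain, for every $n \in \N$,
$$c\, S^*(T^n)^*T^nS \;\le\; S^*(T^n)^*W^{-1}T^nS \;\le\; C\, S^*(T^n)^*T^nS.$$
Taking $2n$-th roots via inequality (\ref{eqn:OI4}) and pulling the scalars out of the functional calculus (the operators involved commute), this becomes
$$c^{\frac{1}{2n}}\, B_n \;\le\; A_n \;\le\; C^{\frac{1}{2n}}\, B_n, \qquad n \in \N.$$

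Finally I would invoke a squeeze argument for self-adjoint operators. Since $0 < c \le C$, both $c^{\frac{1}{2n}} \to 1$ and $C^{\frac{1}{2n}} \to 1$ as $n \to \infty$. If $B_n$ converges in norm to some positive $L$, then $c^{\frac{1}{2n}} B_n$ and $C^{\frac{1}{2n}} B_n$ both converge in norm to $L$; because $0 \le A_n - c^{\frac{1}{2n}}B_n \le \big(C^{\frac{1}{2n}} - c^{\frac{1}{2n}}\big)B_n$ and the norm is monotone on positive operators (a consequence of the ``moreover'' part of (\ref{eqn:OI1})), it follows that $\|A_n - c^{\frac{1}{2n}}B_n\| \le \big(C^{\frac{1}{2n}} - c^{\frac{1}{2n}}\big)\|B_n\| \to 0$, so $A_n \to L$. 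The reverse implication is entirely symmetric, starting from the equivalent inequalities $C^{-\frac{1}{2n}}A_n \le B_n \le c^{-\frac{1}{2n}}A_n$. This establishes the equivalence of convergence together with the equality of the limits.

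I expect the argument to be essentially routine; the only point requiring care is the passage from the operator inequality $c\,B_n^{2n} \le A_n^{2n} \le C\,B_n^{2n}$ to the inequality between $2n$-th roots, which relies on the operator-monotonicity of $x \mapsto x^{\frac{1}{2n}}$ as encoded in (\ref{eqn:OI4}) together with the fact that scalar factors pass through the continuous functional calculus. The conceptual content is simply that conjugation by a fixed invertible operator distorts the relevant positive operators only by the bounded, uniformly invertible factor $W^{-1}$, whose effect is washed out after taking $n$-th roots.
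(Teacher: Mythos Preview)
Your proof is correct and follows essentially the same route as the paper's: both bound the positive invertible operator $(S^{-1})^*S^{-1}$ between scalar multiples of $I$ via (\ref{eqn:OI2}), conjugate by $T^nS$ using (\ref{eqn:OI3}), take $2n$-th roots via (\ref{eqn:OI4}), and conclude by a sandwich argument. Your presentation is slightly more explicit in naming $W=SS^*$ and in spelling out the squeeze at the end, but the constants and inequalities coincide with the paper's (indeed $c^{1/2n}=\|S\|^{-1/n}$ and $C^{1/2n}=\|S^{-1}\|^{1/n}$).
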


\begin{proof}
Since $S$ is invertible, we observe that
\begin{align*}
\norm{S}^{-2} I \le {S^{-1}}^*& S^{-1} \le \norm{S^{-1}}^2 I \\
\Longrightarrow \norm{S}^{-2} S^*{T^n}^*T^nS \le S^*{T^n}^*{S^{-1}}^* & S^{-1} T^nS\le \norm{S^{-1}}^2 S^*{T^n}^*T^nS. &\textrm{ by inequality (\ref{eqn:OI3})}
\end{align*}

Thus from inequality (\ref{eqn:OI4}), it follows that,
\begin{equation}
\label{eqn:inequality1}
 \norm{S}^{-\frac{1}{n}} (S^*{T^n}^*T^nS)^\frac{1}{2n} \le |(S^{-1}TS)^n|^{\frac{1}{n}} \le \norm{S^{-1}}^\frac{1}{n} (S^*{T^n}^*T^nS)^\frac{1}{2n}.
\end{equation}

By basic algebraic manipulation, from inequality (\ref{eqn:inequality1}), we get
\begin{equation}
\label{eqn:inequality2}
    \norm{S^{-1}}^{-\frac{1}{n}} |(S^{-1}TS)^n|^{\frac{1}{n}} \le (S^*{T^n}^* T^nS)^\frac{1}{2n} \le \norm{S}^\frac{1}{n} |(S^{-1}TS)^n|^{\frac{1}{n}}. 
\end{equation}

Since $\lim_{n \to \infty} \|S\|^{\frac{1}{n}} = \lim_{n \to \infty} \|S^{-1}\|^{\frac{1}{n}} = 1$, the result follows from inequalities (\ref{eqn:inequality1}) and  (\ref{eqn:inequality2}), using the sandwiching of limits.
\end{proof}

\begin{lem}
\label{lem:inv_quas_sum_prod}
\textsl{
Let $T \in \kB(\kH)$, and $Q$ be a quasinilpotent in $\kB(\kH)$, such that $TQ = QT$. Then,
\begin{enumerate}
    \item [(i)] $TQ$ is quasinilpotent.
    \item [(ii)] $\spec{T+Q} = \spec{T}$. In particular, $T+Q$ is invertible if and only if $T$ is invertible.
\end{enumerate}
}
\end{lem}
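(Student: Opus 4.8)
The plan is to derive both parts from the spectral radius formula $\mathrm{r}(T) = \lim_{n\to\infty}\|T^n\|^{\frac{1}{n}}$ in combination with the commutativity hypothesis $TQ=QT$, recalling that $Q$ quasinilpotent means precisely $\mathrm{r}(Q)=0$.

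For part (i), I would use that commuting operators satisfy $(TQ)^n = T^n Q^n$ for every $n$, so submultiplicativity of the operator norm gives $\|(TQ)^n\|^{\frac{1}{n}} \le \|T^n\|^{\frac{1}{n}}\,\|Q^n\|^{\frac{1}{n}}$. Letting $n \to \infty$ and invoking the spectral radius formula for each factor, the right-hand side tends to $\mathrm{r}(T)\cdot\mathrm{r}(Q) = \mathrm{r}(T)\cdot 0 = 0$. Since the left-hand side is non-negative, this forces $\mathrm{r}(TQ)=0$, i.e.\ $TQ$ is quasinilpotent.

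For part (ii), I would first isolate the following sub-claim: if $B \in \kB(\kH)$ is invertible and commutes with a quasinilpotent operator $Q$, then $B+Q$ is invertible. Here $BQ=QB$ forces $B^{-1}Q = QB^{-1}$ (multiply by $B^{-1}$ on both sides), so part (i), applied with $B^{-1}$ in the role of $T$, shows that $B^{-1}Q$ is quasinilpotent. By the spectral mapping theorem, $\spec{I + B^{-1}Q} = 1 + \spec{B^{-1}Q} = \{1\}$, which omits $0$; hence $I + B^{-1}Q$ is invertible, and therefore so is the product $B+Q = B\,(I + B^{-1}Q)$. With the sub-claim available, part (ii) follows by applying it to $\lambda I - T$ for each $\lambda \in \C$: if $\lambda \notin \spec{T}$, then $B := \lambda I - T$ is invertible and commutes with $-Q$ (which is again quasinilpotent), so $\lambda I - (T+Q) = B + (-Q)$ is invertible and $\lambda \notin \spec{T+Q}$; conversely, if $\lambda \notin \spec{T+Q}$, then $B := \lambda I - (T+Q)$ is invertible and commutes with $Q$, so $\lambda I - T = B + Q$ is invertible and $\lambda \notin \spec{T}$. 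This gives $\spec{T+Q} = \spec{T}$, and specializing to $\lambda = 0$ yields the invertibility statement.

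The argument is essentially routine once part (i) is in place; the only point demanding attention is that commutativity is preserved under inversion, so that part (i) may be \emph{reused} with $B^{-1}$ to certify that $B^{-1}Q$ is quasinilpotent. That reuse is exactly what makes the factorization $B(I + B^{-1}Q)$ effective, and it is the linchpin of the whole proof rather than a genuine obstacle.
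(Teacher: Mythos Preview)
Your proof is correct. Both parts go through as written: the spectral radius computation in (i) is valid because commutativity gives $(TQ)^n = T^nQ^n$, and the factorization $B+Q = B(I+B^{-1}Q)$ in (ii) works exactly because part (i) ensures $B^{-1}Q$ is quasinilpotent.

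Your route differs from the paper's. The paper invokes a standard result (\cite[Proposition 3.2.10]{KR-I}) stating that for commuting operators $\spec{TQ}\subseteq\spec{T}\spec{Q}$ and $\spec{T+Q}\subseteq\spec{T}+\spec{Q}$; part (i) then follows immediately from $\spec{T}\cdot\{0\}=\{0\}$, and part (ii) from applying the additive inclusion twice, once to $T+Q$ and once to $(T+Q)+(-Q)$. Your argument is more self-contained: you recover (i) directly from the spectral radius formula rather than from the cited spectral inclusion, and you prove (ii) via an explicit invertibility criterion (the factorization through $I+B^{-1}Q$) applied pointwise in $\lambda$. The paper's approach is shorter once the cited proposition is on the table; yours trades that citation for an elementary argument that exposes the mechanism (the Neumann-series-type invertibility of $I+B^{-1}Q$) more transparently. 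Both exploit the same symmetry---swapping the roles of $T$ and $T+Q$---to upgrade a one-sided inclusion to an equality.
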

\begin{proof}
Note that $\spec{Q} = \{ 0 \}$, as $Q$ is quasinilpotent. Since $TQ = QT$, from \cite[Proposition 3.2.10]{KR-I}, we observe that,
\begin{align}
\label{eqn:inv_quas_prod}
    \spec{TQ} &\subseteq \spec{T} \spec{Q} = \{0\},\\
\label{eqn:inv_quas_sum}
    \spec{T+Q} &\subseteq \spec{T} + \spec{Q} = \spec{T}.
\end{align}

\noindent \textsl{Proof of (i).} Since the spectrum of an operator is non-empty, from (\ref{eqn:inv_quas_prod}), we note that $\spec{TQ} = \{0\}.$
Thus, $TQ$ is quasinilpotent.
\vskip 0.1in
\noindent \textsl{Proof of (ii).} Since $T+Q$ and $-Q$ commute, as an application of (\ref{eqn:inv_quas_sum}), we also have the opposite inclusion, 
$$\spec{T} = \spec{(T+Q) + (-Q)} \subseteq \spec{T+Q} + \spec{-Q} = \spec{T+Q}.$$

In particular, $0 \notin \spec{T+Q}$ if and only if $0 \notin \spec{T}$; equivalently, $T+Q$ is invertible if and only if $T$ is invertible.
\end{proof}

\begin{lem}\label{lem:inequality_N+Q}
\textsl{
Let $N$ be a normal operator and $Q$ be a quasinilpotent in $\kB(\kH)$, such that $QN = NQ$. Let $\ep > 0$, and $E_{\ep}$ and $E'_\ep = I-E_{\ep}$ be the spectral projections of $N$ corresponding to $\D_{\ep}$ and $\C \setminus \D_{\ep}$, respectively. Then, there is a positive integer $n(\ep) \in \N$ such that for all $n \ge n(\ep)$,
\begin{equation}
\label{eqn:inequality_N+Q}
(1-\ep)^{2n}~  (N^*N)^n E'_\ep \le \left( (N+Q)^{n} \right) ^* (N+Q)^n \le (1+\ep)^{2n}~  (N^*N)^n E'_\ep  + (2 \ep)^{2n}~ E_{\ep}.
\end{equation}
}
\end{lem}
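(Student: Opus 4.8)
The plan is to apply Fuglede's theorem to decouple the two spectral subspaces of $N$ and treat them separately. Since $QN = NQ$, Fuglede's theorem shows that $Q$ commutes with $N^*$ and with every spectral projection of $N$; in particular $A := N+Q$ commutes with $E_\ep$ and $E'_\ep$, and hence so do $A^*$ and $(A^n)^*A^n$. Thus $(A^n)^*A^n$ commutes with $E_\ep$, the cross terms vanish, and $(A^n)^*A^n = (A^n E_\ep)^*(A^n E_\ep) + (A^n E'_\ep)^*(A^n E'_\ep)$, splitting the desired estimate into a bound over $\ran(E_\ep)$ that will yield the $(2\ep)^{2n}E_\ep$ term and a bound over $\ran(E'_\ep)$ that will yield the $(N^*N)^n E'_\ep$ terms. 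Throughout I work with $0 < \ep < 1$, which is the regime of interest (and, as the lower bound will make clear, is genuinely needed).

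On $\ran(E_\ep)$ the restriction of $N$ is normal with spectrum inside $\D_\ep$, so its spectral radius is at most $\ep$; since the restriction of $Q$ there is quasinilpotent and commutes with that of $N$, Lemma \ref{lem:inv_quas_sum_prod}(ii) gives $\spec{A|_{\ran(E_\ep)}} = \spec{N|_{\ran(E_\ep)}}$, whence $\mathrm{r}(A|_{\ran(E_\ep)}) \le \ep$. Because $A$ commutes with the idempotent $E_\ep$ we have $A^n E_\ep = (A E_\ep)^n$, so the spectral radius formula gives $\|A^n E_\ep\| < (2\ep)^n$ for all large $n$. As $(A^n E_\ep)^*(A^n E_\ep) = E_\ep(A^n)^*A^n E_\ep$ is supported on $\ran(E_\ep)$, multiplying $(A^n E_\ep)^*(A^n E_\ep) \le \|A^n E_\ep\|^2 I$ on both sides by $E_\ep$ via inequality~(\ref{eqn:OI3}) yields $(A^n E_\ep)^*(A^n E_\ep) \le (2\ep)^{2n} E_\ep$.

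The heart of the matter is the estimate on $\ran(E'_\ep)$, where $N$ is invertible and bounded below by $\ep$. Here I factor $A = N(I+R)$ with $R := N^{-1}Q$, all restricted to $\ran(E'_\ep)$. By Fuglede's theorem $R$ commutes with both $N$ and $N^*$, hence $R$ and $R^*$ commute with $(N^*N)^n$; and by Lemma \ref{lem:inv_quas_sum_prod}(i) $R$ is quasinilpotent, so $I+R$ is invertible with $\spec{I+R} = \{1\} = \spec{(I+R)^{-1}}$. Using that $N$ is normal, so that $(N^*)^n N^n = (N^*N)^n$, and pulling the commuting positive factor $(N^*N)^n$ past $(I+R^*)^n$, a direct computation gives $(A^n E'_\ep)^*(A^n E'_\ep) = (N^*N)^n\,\big|(I+R)^n\big|^2$ on $\ran(E'_\ep)$, a product of two commuting positive operators. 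The spectral radius formula applied to $I+R$ and to $(I+R)^{-1}$ (each of spectral radius $1$) produces, for all large $n$, the scalar sandwich $(1-\ep)^{2n} I \le |(I+R)^n|^2 \le (1+\ep)^{2n} I$ on $\ran(E'_\ep)$; conjugating by the square root of the commuting positive operator $(N^*N)^n E'_\ep$ through inequality~(\ref{eqn:OI3}) upgrades this to $(1-\ep)^{2n}(N^*N)^n E'_\ep \le (A^n E'_\ep)^*(A^n E'_\ep) \le (1+\ep)^{2n}(N^*N)^n E'_\ep$.

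Finally I take $n(\ep)$ to be the larger of the two thresholds and recombine: adding the $E'_\ep$ upper bound to the $E_\ep$ bound gives the right-hand inequality, while the $E'_\ep$ lower bound together with positivity of $(A^n E_\ep)^*(A^n E_\ep)$ gives the left-hand inequality. The main obstacle is the $\ran(E'_\ep)$ computation: extracting the factorization $A = N(I+R)$, using Fuglede's theorem to justify that $R$ and $R^*$ commute with $(N^*N)^n$ so that the positive factor $(N^*N)^n$ pulls cleanly out of $(A^n E'_\ep)^*(A^n E'_\ep)$, and then controlling $|(I+R)^n|^2$ from both sides by the spectral radius formula — the lower bound being exactly where the hypothesis $0<\ep<1$ enters.
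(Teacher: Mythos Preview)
Your proof is correct and follows essentially the same approach as the paper's: both split via Fuglede's theorem into the pieces $E_\ep(\kH)$ and $E'_\ep(\kH)$, handle the first by the spectral radius formula for $N+Q$ restricted there, and handle the second via the factorization $N+Q = N(I+N^{-1}Q)$ together with the spectral radius formula for $I+R$ and $(I+R)^{-1}$. The only cosmetic difference is that on $E'_\ep(\kH)$ the paper conjugates the scalar sandwich for $(R^n)^*R^n$ directly by $N^n$, whereas you first rewrite $(A^n)^*A^n$ as the commuting product $(N^*N)^n\,|(I+R)^n|^2$ and then multiply by $(N^*N)^{n/2}$ on each side; these are the same computation. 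Your explicit remark that $0<\ep<1$ is needed for the lower bound (so that $(1-\ep)^{2n}\le(1+\ep)^{-2n}$) is a point the paper leaves implicit, and indeed the lemma is only invoked in that regime.
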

\begin{proof}
To begin with, we derive operator inequalities (see (\ref{eqn:case1}) and (\ref{eqn:case2})) in two cases based on simplifying assumptions on the normal operator $N$, and eventually combine both to arrive at inequality (\ref{eqn:inequality_N+Q}).

\vskip 0.05in

\noindent {\sl Observation 1:} If $N$ is invertible, then there is a positive integer $n(\ep) \in \N$ such that for all $n \ge n(\ep)$,
\begin{equation}\label{eqn:case1}
(1-\ep)^{2n}(N^*N)^n \le \left( (N+Q)^{n} \right) ^* (N+Q)^n \le (1+\ep)^{2n} (N^*N)^n.
\end{equation}

\vskip 0.05in

\noindent {\sl Proof of Observation 1:} Let $R := I+N^{-1}Q$. Since $NQ = QN$, we have $N+Q = NR = RN$. Since $N^{-1}$ commutes with $Q$, by Lemma \ref{lem:inv_quas_sum_prod}, we note that $N^{-1}Q$ is quasinilpotent and that $R$ is an invertible operator. For each $n \in \N$, we have the operator inequality
$$\|R^{-n}\|^{-2} I ~\le~ ({R^n})^* R^n ~ \le ~ \|R^n\|^{2} I.$$

From inequality (\ref{eqn:OI3}), we have
\begin{equation}\label{eqn:low_upp_bound_nq}
\|R^{-n}\|^{-2} {N^n}^*N^n ~\le~ {N^n}^*{R^n}^* R^n {N^n} = \left( (N+Q)^{n} \right) ^* (N+Q)^n~ \le ~ \|R^n\|^{2} {N^n}^*{N^n}.
\end{equation}

By Lemma \ref{lem:inv_quas_sum_prod}-\textsl{(ii)}, $\spec{R}  = \spec{I} = \{1\}$ so that $\spec{R^{-1}} = \spec{R}^{-1} = \{1\}$. Hence, from the spectral radius formula, there is a positive integer $n(\ep) \in \N$ such that 
\begin{equation}
\label{eqn:power_norm_est}
     \norm{R^n} \le (1+\ep)^n ~,~ \textnormal{and} ~\;~   \norm{R^{-n}} \le (1+\ep)^n, \textrm{ for all }~ n \ge n(\ep).
\end{equation}
Keeping in mind that $(1-\ep) \le (1+\ep)^{-1}$, and combining the inequalities in (\ref{eqn:low_upp_bound_nq}) and (\ref{eqn:power_norm_est}), we get inequality (\ref{eqn:case1}).$\hfill \square$

\vskip 0.05in

\noindent {\sl Observation 2:} If $\spec{N} \subseteq \D_{\ep}$, then there is a positive integer $n(\ep) \in \N$ such that for all $n \ge n(\ep)$, \begin{equation}\label{eqn:case2}
0 \le \left( (N+Q)^{n} \right)^* (N+Q)^n  \le (2 \ep)^{2n} I.
\end{equation}

\vskip 0.05in

\noindent {\sl Proof of Observation 2}: Since $N$ and $Q$ commute, by Lemma \ref{lem:inv_quas_sum_prod}, note that $\spec{N} = \spec{N+Q} \subseteq \D_{\ep}$. From the spectral radius formula, for large enough $n$, we have
$$\|(N+Q)^n\| \le (2 \ep)^n,$$
whence inequality (\ref{eqn:case2}) follows. $\hfill \square$

\vskip 0.05in

Now that the result has been established in the above two special cases, we proceed towards the case of general $N$. Since $Q$ commutes with $N$, by Fuglede's theorem, $Q$ commutes with all spectral projections of $N$. In particular,  $N, N^*, Q, Q^*$ commute with the projections $E_\ep$ and $E'_\ep$.

Note that $E'_{\ep}N$, $ E'_{\ep}Q$, respectively, may be viewed as an invertible normal operator, a quasinilpotent operator, respectively, in $\kB(E'_{\ep}(\kH))$, and they commute with each other. By inequality (\ref{eqn:case1}) in Observation 1, there is a positive integer $n_1(\ep)$ such that for all $n \ge n_1(\ep)$,
\begin{equation}\label{eqn:case1r}
(1-\ep)^{2n} (N^*N)^n E'_{\ep} \le \left( (N+Q)^{n} \right)^* (N+Q)^n E'_{\ep} \le (1+\ep)^{2n} (N^*N)^n E'_{\ep}.
\end{equation}

Similarly, we view $E_{\ep}N$ as a normal operator and $E_{\ep}Q$ as a quasinilpotent operator, in $\kB(E_{\ep}(\kH))$. Clearly, $NE_{\ep}$ and $ QE_{\ep}$ commute with each other, and $ \spec {E_\ep} \subseteq \D_{\ep}$. By inequality (\ref{eqn:case2}) in Observation 2, there is a positive integer $n_2(\ep)$ such that for all $n \ge n_1(\ep)$,
\begin{equation}\label{eqn:case2r}
0 \le \left( (N+Q)^{n} \right)^* (N+Q)^n E_{\ep}  \le (2\ep)^{2n} E_{\ep}.
\end{equation}

Let $n(\ep) = \max \{n_1(\ep), n_2(\ep) \}$. Keeping in mind the orthogonal sum $E_{\ep} + E'_{\ep} = I$, and adding the inequalities in (\ref{eqn:case1r}) and (\ref{eqn:case2r}) for $n \ge n(\ep)$, we get our desired inequality (\ref{eqn:inequality_N+Q}).
\end{proof}

Since Lemma \ref{lem:inequality_N+Q} is crucially used in our proof of Theorem \ref{thm:spectral} below, it may help the reader to keep in mind that our approach essentially splits the problem into two parts,  one involving invertible spectral operators, and another involving spectral operators whose spectrum is contained in a small disc centred at the origin.

\begin{thm}\label{thm:spectral}
\textsl{
Let $A$ be a spectral operator in $\kB(\kH)$, and $e_A$ be the idempotent-valued spectral resolution of $A$. For $\lambda \in \R$, let  $F_{\lambda} : = \rP{e_A(\D_{\lambda})}$. Then the following assertions hold :
\begin{itemize}
    \item[(i)]  $\{ F_{\lambda} \}_{\lambda \in \R}$ is a bounded resolution of the identity and 
    $$\lim_{n \to \infty} |A^n|^{\frac{1}{n}} = \int_{0}^{\mathrm{r}(A)} \lambda \; \ud F_{\lambda}, \textrm{ in norm}.$$
    Moreover, the spectrum of the limiting positive operator, $\int_{0}^{\mathrm{r}(A)} \lambda \; \ud F_{\lambda}$, is, $$|\spec{A}| := \{ |\lambda| : \lambda \in \spec{A} \},$$ the modulus of the spectrum of $A$.
    \item[(ii)]  For every vector $x \in \kH$, there is a smallest non-negative real number $\lambda_x$ such that $x$ lies in the range of the spectral idempotent $e_A(\D_{\lambda _x})$, which may be obtained as the following limit
    $$\lim_{n \to \infty} \|A^n x\|^{\frac{1}{n}} = \lambda _x.$$
\end{itemize}
}
\end{thm}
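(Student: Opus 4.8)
The plan is to reduce the norm-convergence assertion to the positive-operator theory of Section~3 via the Dunford decomposition and Fuglede's theorem, and then to extract the vectorwise statement (ii) by a local version of the same estimate. Write $A=D+Q$ for the Dunford decomposition, with $D$ scalar-type and $Q$ quasinilpotent commuting with $D$, and (Remark~\ref{rem:scalar-type_are_diagonalizable}) fix a normal operator $N$ and an invertible $S$ with $D=S^{-1}NS$ and $e_A(\Omega)=e_D(\Omega)=S^{-1}E(\Omega)S$, where $E$ is the spectral measure of $N$. Setting $Q':=SQS^{-1}$ and $B:=N+Q'=SAS^{-1}$, one checks from $DQ=QD$ that $Q'$ is quasinilpotent and $NQ'=Q'N$; thus $A=S^{-1}BS$. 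By Lemma~\ref{lem:limit_equivalence_for_spectranveshi_sequence} it suffices to prove that
$$C_n := (S^* (B^n)^* B^n S)^{\frac{1}{2n}} \longrightarrow L := \int_0^{\mathrm{r}(A)}\lambda\,\ud F_\lambda \quad\text{in norm.}$$
First I would record that $\{F_\lambda\}$, being the $S$-conjugate (via Lemma~\ref{lem:spectral_resolution_of_the_limit}) of the spectral resolution $\lambda\mapsto E(\D_\lambda)$ of the positive operator $|N|$, is a bounded resolution of the identity; and that, by Theorem~\ref{thm:application_of_spectral_theorem} applied to $|N|$ (whose spectral resolution is exactly $E(\D_\lambda)$), one has $L=\lim_n (S^*|N|^{2n}S)^{\frac{1}{2n}}$ with $\spec{L}=\spec{|N|}=|\spec{A}|$. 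This identifies the candidate limit and settles the resolution-of-identity and spectrum claims of (i).

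The heart of the matter is the norm convergence $C_n\to L$. Fix $\ep>0$ and invoke Lemma~\ref{lem:inequality_N+Q}: for $n\ge n(\ep)$,
$$(1-\ep)^{2n}(N^*N)^n E'_\ep \le (B^n)^* B^n \le (1+\ep)^{2n}(N^*N)^n E'_\ep + (2\ep)^{2n}E_\ep,$$
where $E_\ep=E(\D_\ep)$ and $E'_\ep=I-E_\ep$. Since $|N|,E_\ep,E'_\ep$ all lie in the commutative von Neumann algebra generated by $N$, the outer members are the $2n$-th powers of $M_-:=(1-\ep)|N|E'_\ep$ and $M:=(1+\ep)|N|E'_\ep+2\ep E_\ep$ respectively; conjugating by $S$ using (\ref{eqn:OI3}) and taking $2n$-th roots using (\ref{eqn:OI4}) sandwiches $C_n$ between $(S^*M_-^{2n}S)^{\frac{1}{2n}}$ and $(S^*M^{2n}S)^{\frac{1}{2n}}$. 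Because $M_-,M$ commute with $|N|$, the elementary comparisons $(1-\ep)|N|E'_\ep\le|N|$ and $(1+\ep)|N|\le M\le(1+\ep)|N|+2\ep I$ are preserved under $2n$-th powers, so Theorem~\ref{thm:application_of_spectral_theorem} (together with the order-preservation of $X\mapsto\lim_n(S^*X^{2n}S)^{\frac{1}{2n}}$ on commuting positive operators) yields
$$\lim_n (S^*M_-^{2n}S)^{\frac{1}{2n}} = (1-\ep)\!\int_\ep^{\mathrm{r}(A)}\!\!\lambda\,\ud F_\lambda, \qquad (1+\ep)L \le \lim_n (S^*M^{2n}S)^{\frac{1}{2n}} \le (1+\ep)L + 2\ep I.$$
Both outer limits lie within $O(\ep)$ of $L$ in norm; feeding this into the operator sandwich and using self-adjointness of $C_n-L$ gives $\limsup_n\|C_n-L\|=O(\ep)$, and letting $\ep\to0$ completes (i). I expect this paragraph to be the main obstacle: the bookkeeping needed to rewrite the upper estimate as a single power $S^*M^{2n}S$, the verification that the comparison operators genuinely commute so that $X\le Y\Rightarrow X^{2n}\le Y^{2n}$, and the correct matching of the spectral resolution of $|N|^2$ (through the $S^*$-versus-$S^{-1}$ asymmetry of Theorem~\ref{thm:application_of_spectral_theorem}) with $\{F_\lambda\}$.

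For (ii), set $y:=Sx$; since $A^nx=S^{-1}B^ny$ and $\|S^{-1}\|^{\pm1/n}\to1$, it suffices to show $\lim_n\|B^ny\|^{1/n}=\lambda_x$. The set $\{\lambda\ge0:x\in\ran(e_A(\D_\lambda))\}$ is nonempty (it contains $[\mathrm{r}(A),\infty)$) and, since $\lambda\le\lambda'$ forces $e_A(\D_{\lambda'})e_A(\D_\lambda)=e_A(\D_\lambda)$, it is an up-set; its infimum $\lambda_x$ is attained because $e_A(\D_{\lambda_x})x=\lim_{\lambda\downarrow\lambda_x}e_A(\D_\lambda)x$ by the countable additivity (continuity from above) of $e_A$ in $\sot$, and one checks $\lambda_x=\max(\operatorname{supp}\mu_y)$, where $\mu_y$ is the scalar spectral measure of $y$ for $|N|$. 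Applying Lemma~\ref{lem:inequality_N+Q} at the vector $y$ and writing $y':=E'_\ep y$, $y'':=E_\ep y$ gives
$$(1-\ep)^{2n}\|\,|N|^n y'\|^2 \le \|B^n y\|^2 \le (1+\ep)^{2n}\|\,|N|^n y'\|^2 + (2\ep)^{2n}\|y''\|^2,$$
together with $\lim_n\|\,|N|^n y'\|^{1/n}=\max(\operatorname{supp}\mu_{y'})$. If $\lambda_x>0$, choosing $\ep<\lambda_x/2$ keeps $\max(\operatorname{supp}\mu_{y'})=\lambda_x$ and makes the first term on the right dominate, so the outer bounds give $(1-\ep)\lambda_x\le\liminf\le\limsup\le(1+\ep)\lambda_x$; if $\lambda_x=0$ then $y'=0$ and the right-hand bound forces $\limsup_n\|B^ny\|^{1/n}\le2\ep$. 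Letting $\ep\to0$ yields $\lim_n\|B^ny\|^{1/n}=\lambda_x$ in either case, completing (ii).
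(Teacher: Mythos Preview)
Your argument is correct. Part~(i) follows the same outline as the paper: Dunford decomposition, passage to the normal model $B=N+Q'$ via Lemma~\ref{lem:limit_equivalence_for_spectranveshi_sequence}, the sandwich from Lemma~\ref{lem:inequality_N+Q}, conjugation by $S$, and Theorem~\ref{thm:application_of_spectral_theorem} for the outer terms. The only technical difference is cosmetic: you package the upper bound as $(S^*M^{2n}S)^{1/2n}$ for a single commuting positive $M=(1+\ep)|N|E'_\ep+2\ep E_\ep$ and then compare $M$ with $(1+\ep)|N|+2\ep I$, whereas the paper keeps the sum $(1+\ep)^{2n}S^*H^{2n}S+(2\ep)^{2n}I$ and invokes Lemma~\ref{lem:nth_root} to peel off the scalar term directly. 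Both lead to the same $O(\ep)$ sandwich around $L$.

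Part~(ii) is where you genuinely diverge. The paper never returns to the normal model for (ii); it argues directly with the idempotents $e_A(\D_\mu)$ and the invariant subspaces $V_\mu=\ran(e_A(\D_\mu))$ and $W_\mu=\ran(e_A(\C\setminus\D_\mu))$. Two observations suffice: for $x\in V_\mu$ one has $\limsup\|A^nx\|^{1/n}\le r(A|_{V_\mu})\le\mu$ by the spectral radius formula; for $x\notin V_\mu$ one decomposes $x=v+w$ with $0\ne w\in W_\mu$, uses invertibility of $A|_{W_\mu}$ and $r((A|_{W_\mu})^{-1})\le\mu^{-1}$ to get $\liminf\|A^nw\|^{1/n}\ge\mu$, and transfers this to $x$ via $\|A^nx\|\ge\|e_A(\C\setminus\D_\mu)\|^{-1}\|A^nw\|$. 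Your route instead transports to $y=Sx$, applies Lemma~\ref{lem:inequality_N+Q} at the vector $y$, and reads off the answer from the scalar spectral measure of $|N|$ via $\lim_n\|\,|N|^ny'\|^{1/n}=\max(\mathrm{supp}\,\mu_{y'})$. The paper's path is more elementary --- only the spectral radius formula and the idempotent axioms are used, no spectral measure --- while yours is more uniform with part~(i), reusing the same inequality rather than setting up a separate invertibility argument on $W_\mu$.
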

\begin{proof}
Let the unique Dunford decomposition of $A$ be given by $D+Q$, where $D$ is scalar-type, and $Q$ is a quasinilpotent operator commuting with $D$. From Remark \ref{rem:scalar-type_are_diagonalizable}, there is a normal operator $N$ and an invertible operator $S$, in $\kB(\kH)$ such that $D = S^{-1}NS$. Without loss of generality, we may assume that $\|S\|=1$, by replacing $S$ with $S/\|S\|$ if necessary. Let $Q' := SQS^{-1}$. Note that $$\spec{Q'} = \spec{SQS^{-1}} = \spec{Q} = \{0\},$$ so that $Q'$ is a quasinilpotent operator.  Since $DQ = QD$, clearly $N = SDS^{-1}$ and $Q' = SQS^{-1}$ commute with each other, and $A = S^{-1}(N+Q')S$.
\vskip 0.1in

\noindent \textsl{Proof of (i).} For a Borel set $\Omega \in \sB$, note that the spectral projection of $N$ corresponding to $\Omega$ is $E(\Omega) := Se_A(\Omega)S^{-1}$. For $\lambda \in \R$, we define $E_{\lambda} := E(\D _{\lambda}) = S e_A(\D_{\lambda}) S^{-1}$; recall that $e_A(\D_{\lambda}) = e_A(\varnothing) = 0$ when $\lambda < 0$. It is not difficult to see that $\{ E_{\lambda} \}_{\lambda \in \R}$ is a bounded resolution of the identity. 

Let $0 < \ep < 1$, and $E'_\ep := Se_A(\C\setminus \D_\ep)S^{-1} = I - E_{\ep}$.
Then, 
from Lemma \ref{lem:inequality_N+Q}, there is an $n_0(\ep) \in \N$ such that for all $n \ge n_0(\ep)$,
\begin{align}
\label{eqn:ineq_from_lemma}
    (1-\ep)^{2n}~  (N^*N)^n E'_\ep &\le ((N+Q')^n)^*(N+Q')^n \\
    &\le (1+\ep)^{2n}~  (N^*N)^n E'_\ep + (2 \ep)^{2n}~ E_\ep . \nonumber
\end{align}
Clearly, $E'_\ep N^{n*}N^n = |E'_\ep N|^{2n}$. For the sake of convenience, we introduce the following notation: 
$$H_\ep := |E'_\ep N| = (I-E_{\ep})|N| ~, ~\;~  H := |N|, \quad  T_n := (S^*(N+Q')^{n*}(N+Q')^nS)^{\frac{1}{2n}}.$$

Note that $H$ is a positive operator with spectral resolution $\{ E_{\lambda} \}_{\lambda \in \R}$, and from Remark \ref{rem:spectral_resolution_of_positive_operator}, the spectral resolution of the positive operator $H_\ep$ is given by
\begin{align}\label{eqn:E_lambda^ep}
E^{(\ep)}_\lambda := E_{\ep} + (I-E_\ep) E_{\lambda}  &= S\;e_A(\D_{\ep})\;S^{-1} + (S\; e_A(\C \setminus \D_\ep)\;S^{-1})(S \; e_A(\D_\lambda)\; S^{-1}) \\
&= S \; e_A\left( \D_{\ep} \cup (\D_\lambda\setminus \D_\ep) \right)\;S^{-1}.\nonumber
\end{align}

Clearly, $H_\ep \le H$ and recall that $\|S\|=1$. Using inequality (\ref{eqn:OI3}) in inequality (\ref{eqn:ineq_from_lemma}), for all $n \ge n_0(\ep)$ we have, 
\begin{align*}
    (1-\ep)^{2n}~  S^*H_\ep^{2n}S ~\le~ T_n^{2n} ~&\le~ (1+\ep)^{2n}~  S^*H_\ep^{2n}S + (2 \ep)^{2n}~ S^*E_\ep S \nonumber\\
    ~&\le~ (1+\ep)^{2n}~  S^*H_\ep^{2n}S + (2 \ep)^{2n}~\norm{S}^2~I\\
    ~&\le~ (1+\ep)^{2n}~  S^*H^{2n}S + (2 \ep)^{2n}I.
\end{align*}
Consequently, from inequality (\ref{eqn:OI4}) and Lemma \ref{lem:nth_root}, we observe that for all $n \ge n_0(\ep)$,
\begin{equation}\label{eqn:ineq_final}
    (1-\ep)~  \big(S^*H_\ep^{2n}S\big)^\frac{1}{2n} ~\le~ T_n ~\le~ (1+\ep)~ \big(S^*H^{2n}S\big)^\frac{1}{2n} + 2 \ep I.  
\end{equation}

For $\lambda \in \R$, let $F_{\lambda} := \rP{ S^{-1}E_{\lambda}S} = \rP{ e_A(\D_{\lambda})}$, and $F^{(\ep)}_{\lambda} := \rP{S^{-1}E^{(\ep)}_\lambda S} = \rP{ e_A \left( \D_{\ep} \cup (\D_\lambda\setminus \D_\ep) \right) }$.

From Lemma \ref{lem:spectral_resolution_of_the_limit}, we observe that that $F_{\lambda}$, $F^{(\ep)}_{\lambda}$ are bounded resolutions of the identity. Let
$$K := \int_0^{\|H\|} \lambda \ud F_{\lambda}~,~\;~ K_{\ep} := \int_0^{\|H_{\ep}\|} \lambda \ud F^{(\ep)}_{\lambda}.$$
Since $F_{\lambda} = F^{(\ep)}_{\lambda}$ for $\lambda \ge \ep$, we note that $$\|K_{\ep} - K \| \le 2\ep.$$

From Proposition \ref{thm:application_of_spectral_theorem}, 
\begin{align}
\label{eqn:final_lim}
\lim_{n \to \infty} (S^*H^{2n}S)^{\frac{1}{2n}} = K~,~\;~ \lim_{n \to \infty} (S^*H_\ep^{2n}S)^{\frac{1}{2n}} = K_{\ep},\quad \textnormal{in ~ norm}~ ,
\end{align}
with $\spec {K} = \spec{H} = |\spec{N}| = |\spec A|$, and $\spec {K_\ep} = \spec{H_\ep} = |\spec{E'_\ep N}|$.

\vskip0.1in

From the norm-convergence in (\ref{eqn:final_lim}), there is a positive integer $n(\ep) \ge n_0(\ep)$ such that for all $n \ge n(\ep)$, we have 
$$\|(S^*H_{\ep} ^{2n}S)^{\frac{1}{2n}} - K_{\ep}\| ~\le~ \ep ~\;~ \textnormal{and} ~\;~ \|(S^*H ^{2n}S)^{\frac{1}{2n}} - K\| ~\le~ \ep,$$
which yields the following operator inequalities,
\begin{equation}\label{eqn:op_ineq}
    -\ep I \le (S^*H_{\ep} ^{2n}S)^{\frac{1}{2n}} - K_{\ep} ~\le~ \ep I~\;~ \textnormal{and} ~\;~-\ep I ~\le~ (S^*H ^{2n}S)^{\frac{1}{2n}} - K ~\le~ \ep I.
\end{equation}

Using inequality (\ref{eqn:op_ineq}) in combination with inequality (\ref{eqn:ineq_final}), for all $n \ge n(\ep)$, we have
\begin{equation}
\label{eqn:ineq_K_Kep}
    (1-\ep) (K_\ep - \ep I) ~\le~ T_n ~\le~ (1+\ep) (K + \ep I) + 2 \ep I.
\end{equation}

Recall that $\ep < 1$. Since $K \le \|K\| I$, we get the following upper bound for $T_n - K$ from inequality (\ref{eqn:ineq_K_Kep}), $$T_n-K \le \ep I + \ep K+  \ep^2 I + 2 \ep I \le (4 + \|K\|)\ep I, \textrm{ for } n \ge n(\ep).$$
Since $\|K-K_{\ep}\| ~\le~ 2\ep$, clearly $$-2\ep I ~\le~ K-K_{\ep} ~\le~ 2\ep I, \quad \|K_{\ep}\| \le \|K\| + 2\ep \le \|K\|+2,$$ and we get the following lower bound for $T_n-K$ from inequality (\ref{eqn:ineq_K_Kep}),
\begin{align*}
T_n - K &\ge (-K + K_{\ep}) - \ep I - \ep (K_{\ep} - \ep I) \ge -2\ep I -\ep I - \|K_{\ep}\| \ep I \\
&\ge -3\ep I - (\|K\| + 2) \ep I \ge -(5 + \|K\|)\ep I, \textrm{ for } n \ge n(\ep).
\end{align*}
Thus, for $n \ge n(\ep)$, we have
$$-(5+\|K\|) \ep I \le T_n - K \le (4+\|K\|) \ep I,$$
which implies that $\|T_n - K\| \le (5+\|K\|) \ep$. We conclude that the sequence, $\{ T_n \}_{n \in \N}$, converges in norm to $K$. Finally, from Lemma \ref{lem:limit_equivalence_for_spectranveshi_sequence}, it follows that the normalized power sequence of $A = S^{-1}(N+Q')S$ converges in norm to $K$.

Let $\lambda_0 \in \R$. Note that as a function of $\lambda$, $E_{\lambda}$ is constant in a neighborhood of $\lambda_0$ if and only if $e_A(\D_{\lambda})$ is constant in a neighborhood of $\lambda_0$ if and only if $F_{\lambda}$ is constant in a neighborhood of $\lambda_0$. From Theorem \ref{thm:spec_res}, we observe that $\lambda _0 \notin \spec{K}$ if and only if $\lambda_0 \notin \spec{|N|}$, from which it follows that, 
$$\spec{K} = \spec{|N|} = |\spec{N}| = |\spec{D}| = |\spec{A}|.$$

\vskip 0.1in

\noindent \textsl{Proof of (ii).} For a vector $x \in \kH$, let $ \Lambda_x := \{ \lambda \ge 0 : x \in \ran \left( e_A(\D_{\lambda}) \right) \}$. Since $\ran \left( e_A(\D_{\mathrm{r}(A)}) \right) = \kH$, clearly $\mathrm{r}(A) \in \Lambda_x$ so that $\Lambda _x$ is a non-empty subset of $\R_{\ge 0}$. Let $\lambda_x$ denote the infimum of $\Lambda_x$. Note that $\{ \rP {e_A(\D_{\lambda})} \}_{\lambda \in \R}$ is a bounded resolution of the identity as shown in part \textsl{(i)}, and from the monotonicity and right-continuity of resolutions of the identity (see Definition \ref{def:resolution_of_the_identity}), it is straightforward to verify that $\lambda_x \in \Lambda_x$. 

For $\mu \ge 0$, let $V_{\mu} :=  \ran \left( e_A(\D_\mu) \right)$ and $W_{\mu} := \ran \left( e_A(\C \setminus \D_{\mu} ) \right) = \ker \left( e_A(\D_\mu) \right)$. The idempotent $e_A(\C \setminus \D_{\mu})$ is the projection onto the subspace $W_{\mu}$ along the complementary subspace $V_{\mu}$. In particular, for every pair of vectors $v \in V_{\mu}, w \in W_{\mu}$, we have $e_A(\C \setminus \D_{\mu}) (v+w) = w.$ Since $V_{\mu}$ and $W_{\mu}$ are invariant under $A$, for every $n \in \N$ we have,
\begin{equation}
\label{eqn:idem_d_mu}
e_A(\C\setminus\D_{\mu})(A^n v + A^n w) = A^n w.
\end{equation}

\vskip 0.05in

\noindent {\sl Observation 1:} For $\mu \ge 0$ and a vector $x$ in $V_{\mu}$, we have $$\limsup_{n \to \infty} \|A^nx\|^{\frac{1}{n}} \le \mu.$$ 

\noindent {\sl Proof of Observation 1:} Note that $\spec{A|_{V_{\mu}}} \subseteq \D_\mu$. For every vector $x$ in $\ran (e_A(\D_{\mu}))$, using the spectral radius formula we have, 
\begin{align*}
\limsup_{n \to \infty} \|A^nx\|^{\frac{1}{n}} &= \limsup_{n \to \infty} \|(A|_{V_{\mu}})^n x\|^{\frac{1}{n}} \le \limsup_{n \to \infty} \left( \|(A|_{V_{\mu}})^n\|^{\frac{1}{n}} \|x\|^{\frac{1}{n}} \right) \\
&= \left( \lim_{n \to \infty} \|(A|_{V_{\mu}})^n\|^{\frac{1}{n}}  \right) \left( \lim_{n \to \infty} \|x\|^{\frac{1}{n}} \right) \\
&\le \mu.
\end{align*}
$\hfill \square$

\vskip 0.05in

\noindent {\sl Observation 2:} For $0 \le \mu < \mathrm{r}(A)$ and a vector $x$ in $ \kH \setminus V_{\mu}$, we have $$\liminf_{n \to \infty} \|A^n x\|^{\frac{1}{n}} \ge \mu.$$ 

\noindent {\sl Proof of Observation 2:} Since the assertion is trivially true for $\mu = 0$, we may assume that $\mu > 0$. Note that $\spec{A|_{W_{\mu}}}$ is contained in the closed annulus $\overline{\D_{\mathrm{r}(A)}\setminus \D_\mu}$, so that $A|_{W_{\mu}}$ is invertible. From the spectral mapping theorem, it follows that $\mathrm{sp} ((A|_{W_{\mu}})^{-1})$ is contained in the closed annulus $\overline{\D_{\mu^{-1}}\setminus \D_{\mathrm{r}(A)^{-1}}}$. Using the spectral radius formula, we observe that $\lim_{n \to \infty} \|(A|_{W_{\mu}})^{-n} \|^{\frac{1}{n}} \le \mu^{-1}$, which implies $\lim_{n \to \infty} \|(A|_{W_{\mu}})^{-n} \|^{-\frac{1}{n}}  \ge \mu$. For a non-zero vector $x$ in $W_{\mu}$, we get the following inequality,
\begin{align*}
\liminf_{n \to \infty} \|A^n x\|^\frac{1}{n} &= \liminf_{n \to \infty} \|(A|_{W_{\mu}})^n x\|^\frac{1}{n} \ge \liminf_{n \to \infty} \left( \|(A|_{W_{\mu}})^{-n}\|^{-1} \|x\| \right) ^{\frac{1}{n}}\\
& = \left( \lim_{n \to \infty}  \|(A|_{W_{\mu}})^{-n}\|^{-\frac{1}{n}} \right) \left( \lim_{n \to \infty} \|x\|^{\frac{1}{n} } \right) \\
&\ge \mu.
\end{align*}
More generally, for a vector $x$ in $\kH \setminus V_{\mu}$, there is a unique pair of vectors $v \in V_{\mu}, w \in W_{\mu}$ such that $x = v + w$ and $w \neq 0$. Note that $e_A(\C \setminus \D_{\mu}) \ne 0$ as $\mu < \mathrm{r}(A)$, so that $\|e_A(\C \setminus \D_{\mu} ) \| > 0$. For $K := \|e_A(\C \setminus \D_{\mu} ) \|^{-1} >0$ and $n \in \N$, using (\ref{eqn:idem_d_mu}) we have the inequality, $\norm{A^n v + A^n w} \ge K \norm{A^n w} $. It now follows from the first part of the proof that 
\begin{align*}
\liminf_{n \to \infty} \|A^n x\|^{\frac{1}{n}} &= \liminf_{n \to \infty} \|A^n v + A^n w\|^{\frac{1}{n}} \ge \liminf_{n \to \infty} \left( K^{\frac{1}{n}} \|A^n w\|^{\frac{1}{n}} \right) \\
&= \left(\liminf_{n \to \infty} \|A^n w\|^{\frac{1}{n}}\right) \left( \lim_{n \to \infty} K^{\frac{1}{n}} \right) \\
&\ge \mu.
\end{align*}
$\hfill \square$

For a vector $x \in \kH$ and $\ep > 0$, from Observation 1, Observation 2, and the definition of $\lambda _x$, it follows that
$$\lambda_x - \ep \le \liminf_{n \to \infty} \|A^n x\|^{\frac{1}{n}} \le \limsup_{n \to \infty} \|A^nx\|^{\frac{1}{n}} \le \lambda_x.$$
Thus, for every vector $x$ in $\kH$, the limit of the sequence $\{ \norm{A^nx}^\frac{1}{n} \}_{n \in \N}$ exists, and is equal to $\lambda_x$.
\end{proof}

\begin{remark}
We record two immediate consequences of Theorem \ref{thm:spectral}. Firstly, note that the norm-limit of the normalized power sequence of a spectral operator does {\bf not} depend on the quasinilpotent part in its Dunford decomposition. Secondly, since every matrix in $M_m(\C)$ is a spectral operator, \cite[Theorem 3.8]{stronger_form_yamamoto} follows as an immediate corollary of Theorem \ref{thm:spectral}. 
\end{remark}

As a natural corollary of Theorem \ref{thm:spectral}, below we derive some results on the asymptotic behaviour of a one-parameter dynamical system with state space $\kH$ whose infinitesimal generator is a spectral operator.

\begin{thm}
\label{thm:exp_one_p_group}
\textsl{
Let $A$ be a spectral operator in $\kB(\kH)$, and $e_A$ be the idempotent-valued spectral resolution of $A$. For $\lambda \in \R$, let  $G_{\lambda} : = \rP{e_A(\mathbb{H}_{\lambda})}$. Then the following assertions hold:
\begin{itemize}
\item[(i)] $\{ G_{\lambda} \}_{\lambda \in \R}$ is a bounded resolution of the identity and 
    $$\lim_{t \to \infty} |\exp (tA)|^{\frac{1}{t}} = \int_{-\infty}^{\mathrm{r}(A)} \exp (\lambda) \; \ud G_{\lambda}, \textrm{ in norm}.$$
    Moreover, the spectrum of the limiting positive operator,$\int_{-\infty}^{\mathrm{r}(A)} \exp (\lambda) \; \ud G_{\lambda}$, is, $$\exp \left( \Re \left( \spec{A} \right) \right) := \left\{ \exp (\Re \lambda) : \lambda \in \spec{A} \right\}.$$
\item[(ii)] For every non-zero vector $x \in \kH$, there is a smallest real number $\gamma_x$ such that $x$ lies in the range of the spectral idempotent $e_A(\mathbb{H}_{\gamma_x})$, which may be obtained as the following limit, $$  \lim_{t \to \infty} \frac{\log \left( \|\exp (tA) \, x\| \right)}{t}  = \gamma_x.$$
\end{itemize}
}
\end{thm}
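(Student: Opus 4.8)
The plan is to run the strategy of Theorem \ref{thm:spectral} for the one-parameter group $\{\exp(tA)\}_{t \ge 0}$ rather than for the discrete powers $\{A^n\}$, exploiting that $B := \exp(A)$ is again a spectral operator (Theorem \ref{thm:analytic_functions_of_spectral_operators}) with $e_B(\Omega) = e_A(\exp^{-1}(\Omega))$, and that $|\exp(z)| = \exp(\Re z)$ converts the moduli of eigenvalues (discs $\D_\lambda$) into real parts of eigenvalues (half-planes $\mathbb{H}_\lambda$) under the increasing change of variables $s = e^\lambda$. The pleasant surprise, which I expect to make the continuous statement \emph{easier} than its discrete ancestor, is that exponentiating the quasinilpotent part produces a globally invertible operator, so the delicate splitting of Lemma \ref{lem:inequality_N+Q} into an invertible block and a small-disc block is no longer needed.

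For part (i), I would start from the Dunford decomposition and Remark \ref{rem:scalar-type_are_diagonalizable}, writing $A = S^{-1}(N + Q')S$ with $N$ normal, $Q' = SQS^{-1}$ quasinilpotent, and $NQ' = Q'N$. Since $N$ and $Q'$ commute, $\exp(tA) = S^{-1}\exp(tN)\,R_t\,S$ where $R_t := \exp(tQ')$ commutes with $\exp(tN)$; by Fuglede's theorem $R_t$ also commutes with $\exp(tN)^*$, hence with $\exp(tN)^*\exp(tN) = \exp(2t\,\Re N) = P^{2t}$, where $P := \exp(\Re N)$ is positive and invertible. A short computation then gives $|\exp(tA)|^2 = S^*\,(R_t^* R_t)\,P^{2t}\,S$ with $R_t^* R_t$ and $P^{2t}$ commuting. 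Because $\spec{R_t} = \exp(t\spec{Q'}) = \{1\}$, the spectral radius formula (bridged from integers to reals using the uniform bound $\sup_{\theta \in [0,1]}\|\exp(\pm\theta Q')\| < \infty$) yields $\|R_t\|, \|R_t^{-1}\| \le (1+\ep)^t$ for all large $t$, so by (OI2) $(1+\ep)^{-2t}I \le R_t^* R_t \le (1+\ep)^{2t}I$. Multiplying by the commuting operator $P^{2t}$, conjugating by $S$ via (OI3), and extracting $\tfrac{1}{2t}$-th roots via (OI4) produces the clean estimate
\[
(1+\ep)^{-1}\,(S^* P^{2t} S)^{\frac{1}{2t}} ~\le~ |\exp(tA)|^{\frac{1}{t}} ~\le~ (1+\ep)\,(S^* P^{2t} S)^{\frac{1}{2t}}, \qquad t \ge t(\ep).
\]
It remains to evaluate $\lim_{t\to\infty}(S^* P^{2t}S)^{\frac{1}{2t}}$. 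Proposition \ref{thm:application_of_spectral_theorem}, whose proof goes through verbatim with a continuous exponent (Lemma \ref{lem:convergence_of_nth_roots_of_sp_sequence_of_positive_operators} and Proposition \ref{thm:key1} use only $c^{1/t}\to 1$ for constants $c>0$), gives this limit as $\int_0^{\|P\|}\mu\,\ud F_\mu$, where $F_\mu = \rP{S^{-1}E_\mu^P S}$ and $\{E_\mu^P\}$ is the spectral resolution of $P$. Since the self-adjoint operator $\Re N$ has spectral projection $E(\mathbb{H}_\gamma)$ on $(-\infty,\gamma]$ (the push-forward of the spectral measure of $N$ under $\Re$), one gets $E_\mu^P = E(\mathbb{H}_{\log\mu})$ and hence $F_\mu = \rP{e_A(\mathbb{H}_{\log\mu})} = G_{\log\mu}$; the substitution $\mu = e^\lambda$ turns the integral into $\int_{-\infty}^{\mathrm{r}(A)}\exp(\lambda)\,\ud G_\lambda$, the upper limit being harmlessly raised from $\log\|P\| = \max\Re\spec{A}$ to $\mathrm{r}(A)$ since $G_\lambda \equiv I$ there. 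Letting $\ep \downarrow 0$ in the sandwich gives norm-convergence to this operator, whose spectrum is $\spec{P} = \exp(\Re\spec{A})$; that $\{G_\lambda\}$ is a bounded resolution of the identity follows from Lemma \ref{lem:spectral_resolution_of_the_limit} applied to the bounded resolution $\{E(\mathbb{H}_\lambda)\}$.

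For part (ii), I would mirror the two Observations in the proof of Theorem \ref{thm:spectral}(ii), replacing discs by half-planes and $A^n$ by $\exp(tA)$. Set $\gamma_x := \inf\{\gamma : x \in \ran(e_A(\mathbb{H}_\gamma))\}$; monotonicity and right-continuity of the idempotents give $x \in \ran(e_A(\mathbb{H}_{\gamma_x}))$. On the invariant subspace $V_\gamma := \ran(e_A(\mathbb{H}_\gamma))$ one has $\spec{A|_{V_\gamma}} \subseteq \mathbb{H}_\gamma$, so the growth-bound-equals-spectral-bound identity for the norm-continuous group (again a spectral-radius-formula argument) gives $\limsup_t \tfrac{1}{t}\log\|\exp(tA)x\| \le \gamma_x$; on $W_\gamma := \ran(e_A(\C\setminus\mathbb{H}_\gamma))$, invertibility of $A|_{W_\gamma}$ together with the spectral mapping theorem applied to $\exp(-tA|_{W_\gamma})$ gives $\liminf_t \tfrac{1}{t}\log\|\exp(tA)x\| \ge \gamma$ for every $\gamma < \gamma_x$, after splitting $x = v + w$ and using that $e_A(\C\setminus\mathbb{H}_\gamma)$ is bounded below on its range exactly as in \eqref{eqn:idem_d_mu}. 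Sending $\gamma \uparrow \gamma_x$ squeezes the limit to $\gamma_x$.

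The main obstacle I anticipate is the passage from the integer index $n$ to the continuous parameter $t$: every approximation result invoked (the spectral radius formula, Proposition \ref{thm:application_of_spectral_theorem}, and the growth-bound identity) must be re-read with a real exponent. Fortunately each such bridge reduces to the elementary fact that $\sup_{\theta\in[0,1]}\|\exp(\theta C)\| < \infty$ for a bounded operator $C$, so that interpolating $t = n + \theta$ between consecutive integers alters norms only by a uniformly bounded factor that vanishes after extracting the $\tfrac{1}{t}$-th root. The one genuinely new computation — identifying the limit via the change of variables $s = e^\lambda$ and the push-forward of the spectral measure of $N$ under $\Re$ — is where I would be most careful to confirm that the disc-resolution $\{F_\mu\}$ furnished by Proposition \ref{thm:application_of_spectral_theorem} matches the half-plane-resolution $\{G_\lambda\}$ appearing in the statement.
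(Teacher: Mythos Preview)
Your approach is genuinely different from the paper's and, in one respect, cleaner. The paper does not re-open the Dunford decomposition of $A$ at all: it simply applies Theorem~\ref{thm:spectral} to the spectral operator $B = \exp(A)$ (whose idempotent resolution is $e_B(\Omega) = e_A(\exp^{-1}\Omega)$ by Theorem~\ref{thm:analytic_functions_of_spectral_operators}), reads off $\lim_n |\exp(nA)|^{1/n}$ and $\lim_n \|\exp(nA)x\|^{1/n}$, rewrites the disc-resolution $F_\mu = \rP{e_A(\exp^{-1}\D_\mu)}$ as the half-plane resolution $G_\lambda = F_{\exp\lambda}$ via the substitution $\mu = e^\lambda$, and finally cites \cite[Theorem~4.1]{stronger_form_yamamoto} for the passage $n \to t$. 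Your route instead exploits the pleasant accident that the quasinilpotent part exponentiates to an invertible operator with spectrum $\{1\}$, so the two-block splitting of Lemma~\ref{lem:inequality_N+Q} is unnecessary and the sandwich comes out in one stroke. The paper's path is shorter and modular; yours is more self-contained and explains why the continuous-time statement is actually \emph{easier} than its discrete ancestor.

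There is, however, one real slip. The displayed identity $|\exp(tA)|^2 = S^*(R_t^*R_t)P^{2t}S$ is false: computing $|\exp(tA)|^2$ from $\exp(tA) = S^{-1}\exp(tN)R_t S$ leaves an inner factor $(S^{-1})^*S^{-1}$ that does not vanish. What is true is that $(\exp(tN)R_t)^*(\exp(tN)R_t) = P^t(R_t^*R_t)P^t$ (Fuglede gives $R_tP^t = P^tR_t$, and taking adjoints gives $R_t^*P^t = P^tR_t^*$), so your sandwich holds with the middle term replaced by $\bigl(S^*(\exp(tN)R_t)^*(\exp(tN)R_t)S\bigr)^{1/2t}$. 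To get back to $|\exp(tA)|^{1/t}$ you then need the continuous-parameter analogue of Lemma~\ref{lem:limit_equivalence_for_spectranveshi_sequence}, which is precisely the device the paper uses to absorb the stray $(S^{-1})^*S^{-1}$; you should invoke it explicitly. A smaller point in part~(ii): it is $\exp(tA)|_{W_\gamma}$, not $A|_{W_\gamma}$, that is invertible (indeed $0$ can lie in $\spec{A|_{W_\gamma}}$ when $\gamma < 0$); the estimate you actually want is $\|\exp(-tA|_{W_\gamma})\|^{1/t} \to \mathrm{r}\bigl(\exp(-A|_{W_\gamma})\bigr) \le e^{-\gamma}$.
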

\begin{proof}
Let $A$ be a spectral operator, and $e_A$ be the idempotent-valued spectral resolution of $A$. 

\vskip 0.05in

\noindent\textsl{Proof of (i).} Since the scalar function $z \mapsto \exp(z)$ is analytic on $\spec{A}$, it follows from Theorem \ref{thm:analytic_functions_of_spectral_operators} that $\exp(A)$ is a spectral operator, whose idempotent-valued resolution of the identity, $e_{\exp(A)}$, is given by $e_{\exp(A)}(\Omega) = e_A(\exp^{-1}\Omega)$ for a Borel set $\Omega$.

For $\lambda \in \R$, let $F_\lambda := \rP{e_{\exp(A)} \left( \D_{\lambda} \right)}$. 
Note that $e_{\exp(A)}\left(\D_{\exp(\lambda)}\right) = e_A(\mathbb{H}_{\lambda})$ for all $\lambda \in \R$, whence $ G_\lambda = F_{\exp(\lambda)} $ for all $\lambda \in \R$. Since  $\exp|_{\R}$ is a strictly increasing continuous function and $\{F_\lambda\}_{\lambda \in \R}$ is a bounded resolution of the identity, it follows that $\{G_\lambda\}_{\lambda \in \R}$ is a bounded resolution of the identity. Moreover, the projection-valued measure corresponding to $\{ G_{\lambda} \}_{\lambda \in \R}$ is the pushforward of the projection-valued measure corresponding to $\{ F_{\lambda} \}_{\lambda \in \R}$ under $\exp|_{\R}$. Thus, using Theorem \ref{thm:spectral}-\textsl{(i)} and noting from the spectral mapping theorem that $\mathrm{r}(\exp (A)) = \exp ({\mathrm{r}(A)})$, we conclude from a change of  variables ({cf.\ \cite[Theorem 3.6.1]{bogachev_measure}}) that
$$\lim_{n \to \infty} |\exp(nA)|^{\frac{1}{n}} = \int_{0}^{\mathrm{r}(\exp (A))} \lambda \; \ud F_{\lambda} = \int_{-\infty}^{\mathrm{r}(A)} \exp(\lambda) \ud G_\lambda, \textrm { in norm},$$
and the spectrum of the limit is,
\begin{align*}
|\,\spec {\exp(A)}| &= \{ |\lambda| : \lambda \in \spec{\exp(A)} \}=  \{ |\exp(\lambda)| : \lambda \in \spec{A} \}\\
&= \left\{ \exp({\Re \lambda}) : \lambda \in \spec{A} \right\}  = \exp\left({\Re \left( \spec{A} \right)}\right).
\end{align*}

\noindent \textsl{Proof of (ii).} From Theorem \ref{thm:spectral}-\textsl{(ii)}, for every vector $x \in \kH$, there is a smallest non-negative real number $\lambda_x$ such that $x$ lies in the range of the spectral idempotent $e_{\exp(A)}(\D_{\lambda_x})$, which may be obtained as the following limit, $$\lim_{n \to \infty, n \in \N} \|\exp(nA) \, x\|^{\frac{1}{n}} = \lambda _x.$$

Let $x \in \kH$ be a non-zero vector. Since $\exp(A)$, being invertible, has trivial nullspace, we observe that $\Lambda_x := \{ \lambda \ge 0 : x \in \ran \left( e_{\exp(A)}(\D_{\lambda}) \right) \}$ is bounded below by $\left(\mathrm{r}(\exp(A)^{-1})\right)^{-1}$. In particular, $\lambda_x = \min \Lambda_x > 0$ for all $x \neq 0$.

Let $\gamma_x := \ln \lambda_x$. Since $e_A(\mathbb{H}_{\lambda}) = e_{\exp(A)}\left(\D_{\exp(\lambda)}\right)$ for all $\lambda \in \R$ and $\exp(\lambda)$ is a strictly increasing function, it follows that $\gamma_x$ is the smallest real number such that $x$ lies in the range of the spectral idempotent $e_A(\mathbb{H}_{\gamma_x})$. Hence,
\begin{align*}
\phantom{\Rightarrow}&\lim_{n \to \infty, n \in \N} \|\exp(nA)\, x\|^{\frac{1}{n}} = \exp(\gamma_x)\\
\Rightarrow &  \lim_{n \to \infty, n \in \N} \frac{\log \left( \|\exp (nA) \, x\| \right)}{n}  = \gamma_x.
\end{align*}

\vskip0.01in

\noindent Imitating the steps in the proof of \cite[Theorem 4.1]{stronger_form_yamamoto}, we conclude that 
$$\lim_{t \to \infty, t \in \R} |\exp(tA)|^{\frac{1}{t}}=\lim_{n \to \infty, n \in \N} |\exp(nA)|^{\frac{1}{n}},$$
and that
\[\lim_{t \to \infty, t \in \R} \|\exp (tA) \, x\|^{\frac{1}{t}} = \lim_{n \to \infty, n \in \N} \|\exp(nA) \, x\|^{\frac{1}{n}} = \exp(\gamma_x) \textrm{ for every } x \in \kH. \qedhere\]
\end{proof}

\section{The normalized power sequence of weighted shift operators}

In \cite[Example 8.4]{haagerup_schultz}, an example of a weighted shift operator due to Voiculescu is given whose normalized power sequence does {\bf not} converge in $\sot$, and {\it a fortiori}, does {\bf not} converge in norm. In \cite[Example 3.12]{bhat_bala}, Bhat and Bala consider yet another example of a weighted shift operator which does {\bf not} converge in $\wot$, and  {\it a fortiori}, does {\bf not} converge in norm. In Proposition \ref{prop:weighted_shift} below, we characterize all unilateral weighted shift operators for which the normalized power sequence converges in norm.

\begin{definition}
Let $w = \{w_n \}_{n \in \N}$ be a bounded sequence of complex numbers, with the convention that $w_{0} := 0$. Let $\{ \delta_n \}_{n \in \N}$ denote the standard orthonormal basis of $\ell ^2(\N)$, with the convention $\delta_0 := 0$. The bounded operators $F_w, B_w : \ell ^2(\N) \to \ell ^2(\N)$ defined on the standard orthonormal basis as $F_w(\delta_k) = w_k \delta _{k+1}, B_w(\delta_k) = w_k \delta _{k-1}$, respectively, for $k \in \N$, are said to be the unilateral {\it weighted forward-shift operator, weighted backward-shift operator}, respectively, with weight $w$.
\end{definition}

\begin{prop}
\label{prop:weighted_shift}
\textsl{
Let $w = \{w_n \}_{n \in \N}$ be a bounded sequence of complex numbers.
\begin{itemize}
    \item[(i)] For $k, n \in \N$, let $$\alpha_{k, n} := \left( \prod_{i=0}^{n-1} |w_{k+i}| \right)^{\frac{1}{n}}.$$ Then the normalized power sequence of $F_w$ converges in norm if and only if there is a non-negative real number $\alpha$ such that $\lim_{n \to \infty} \alpha_{k, n} = \alpha$ uniformly in $k$; in this case, $\lim_{n \to \infty} |F_w ^n|^{\frac{1}{n}} = \alpha I$ in norm.
    \item[(ii)] The normalized power sequence of $B_w$ converges in norm if and only if $\lim_{n \to \infty} w_n = 0$; in this case, $\lim_{n \to \infty} |B_w ^n|^{\frac{1}{n}} = 0$ in norm.
\end{itemize}
}
\end{prop}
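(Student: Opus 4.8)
The plan is to reduce everything to a computation with diagonal operators. First I would record that for each $n$ the operator $F_w^n$ is again a weighted forward-shift by $n$: iterating the defining relation gives $F_w^n\delta_k = \big(\prod_{i=0}^{n-1}w_{k+i}\big)\delta_{k+n}$, and likewise $B_w^n\delta_k = \big(\prod_{i=0}^{n-1}w_{k-i}\big)\delta_{k-n}$, with the convention $\delta_j=0$ for $j\le 0$. Since distinct basis vectors are carried to orthogonal basis vectors, the positive operators $(F_w^n)^*F_w^n$ and $(B_w^n)^*B_w^n$ are diagonal in $\{\delta_k\}$, and taking the positive $2n$-th root shows that $|F_w^n|^{1/n}$ and $|B_w^n|^{1/n}$ are themselves diagonal with
\[
|F_w^n|^{1/n}\delta_k = \alpha_{k,n}\,\delta_k, \qquad |B_w^n|^{1/n}\delta_k = \beta_{k,n}\,\delta_k,
\]
where $\alpha_{k,n}=\big(\prod_{i=0}^{n-1}|w_{k+i}|\big)^{1/n}$ and $\beta_{k,n}=\big(\prod_{i=0}^{n-1}|w_{k-i}|\big)^{1/n}$, interpreting $\beta_{k,n}=0$ when $k\le n$. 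The whole proposition then rests on one elementary fact: a sequence of diagonal operators converges in norm if and only if the corresponding sequences of diagonal entries converge \emph{uniformly} in $k$, and in that case the limit is the diagonal operator assembled from the entrywise limits.

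For part (i) the ``if'' direction is immediate from this fact: if $\alpha_{k,n}\to\alpha$ uniformly in $k$, then $|F_w^n|^{1/n}\to\alpha I$ in norm. For the ``only if'' direction, norm convergence forces $\alpha_{k,n}\to\ell_k$ uniformly for some bounded sequence $\{\ell_k\}$, and the real content is to show this limit is constant. Here I would exploit the multiplicative structure of the windows through the cocycle identity
\[
\alpha_{k,n+1}^{\,n+1} = |w_k|\,\alpha_{k+1,n}^{\,n}, \qquad\text{equivalently}\qquad \alpha_{k,n+1} = |w_k|^{1/(n+1)}\,\alpha_{k+1,n}^{\,n/(n+1)}.
\]
Letting $n\to\infty$ and using $|w_k|^{1/(n+1)}\to 1$ together with $\alpha_{k+1,n}^{\,n/(n+1)}\to\ell_{k+1}$ (the exponent tends to $1$; the case $\ell_{k+1}=0$ being handled by a logarithm estimate), I obtain $\ell_k=\ell_{k+1}$ for all $k$, whence $\ell_k\equiv\alpha$. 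Establishing this constancy of the limit, and in particular making the passage to the limit clean when $\ell_{k+1}=0$, is the main obstacle of part (i).

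For part (ii) the decisive observation is that for each fixed $k$ one has $\beta_{k,n}=0$ as soon as $n\ge k$, so the entrywise limit of $\{\beta_{k,n}\}_n$ is $0$ for every $k$. Consequently, if $\{|B_w^n|^{1/n}\}$ converges in norm at all, its limit must be the zero operator, and norm convergence is \emph{equivalent} to $\big\||B_w^n|^{1/n}\big\| = \sup_k\beta_{k,n}\to 0$. The implication $\lim_n w_n=0 \Rightarrow \sup_k\beta_{k,n}\to 0$ I would prove by a direct window estimate: fixing $\ep>0$ and $K$ with $|w_j|<\ep$ for $j>K$, any window of length $n$ contains at most $K$ factors bounded by $\norm{w}_\infty$ and at least $n-K$ factors bounded by $\ep$, so $\beta_{k,n}\le \max(1,\norm{w}_\infty)^{K/n}\,\ep^{\,1-K/n}\to\ep$, giving $\limsup_n\sup_k\beta_{k,n}\le\ep$ and hence the conclusion.

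The reverse implication, namely recovering $\lim_n w_n=0$ from the uniform decay of all sliding-window geometric means $\sup_k\beta_{k,n}\to 0$, is the step I expect to be the genuine obstacle, since a priori the decay of every length-$n$ window geometric mean constrains \emph{products} of consecutive weights rather than individual factors. This is the delicate point of the whole section: one must argue that persistently large values of $|w_n|$ force some window to carry a geometric mean bounded away from zero. I would attack it by contraposition, assuming a subsequence with $|w_{n_j}|\ge\ep_0$ and trying to produce, for each length $n$, a window whose geometric mean stays bounded below; the care needed to build such windows, and the role played by any lower control on the weights, is exactly where I would concentrate the argument.
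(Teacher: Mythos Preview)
Your diagonal reduction is exactly the paper's approach; indeed the paper's entire proof of (i) is the formula $|F_w^n|^{1/n}=\sum_k\alpha_{k,n}E_{\delta_k}$ followed by ``the assertion follows,'' and for (ii) it observes that SOT-convergence to $0$ forces any norm limit to be $0$ and then leaves the rest to the reader. So you have already gone considerably further than the paper in isolating the two nontrivial points: constancy of the entrywise limit in (i), and the implication $\sup_k\beta_{k,n}\to 0\Rightarrow w_n\to 0$ in (ii).

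There is, however, a genuine gap in your cocycle argument: the step $|w_k|^{1/(n+1)}\to 1$ fails when $w_k=0$, and this cannot be repaired, because the assertion itself is false without a nonvanishing hypothesis on the weights. With $w_1=0$ and $w_k=1$ for $k\ge 2$ one has $\alpha_{1,n}\equiv 0$ and $\alpha_{k,n}\equiv 1$ for every $k\ge 2$, so $|F_w^n|^{1/n}=I-E_{\delta_1}$ is a constant sequence that converges in norm but not to a scalar multiple of $I$; the ``only if'' direction of (i) is therefore false as stated. The reverse implication in (ii) that you rightly flag as the obstacle is likewise false: with $w_k=1$ for $k$ even and $w_k=0$ for $k$ odd, any product of two consecutive weights vanishes, so $B_w^n=0$ for all $n\ge 2$ and $|B_w^n|^{1/n}\to 0$ in norm, yet $w_n\not\to 0$. (Replacing the zeros by $2^{-k^2}$ shows that even a nonvanishing hypothesis does not rescue (ii).) In short, the obstacles you isolated are not technical difficulties to be overcome but genuine counterexamples; under the natural extra hypothesis for (i) --- all $w_k\ne 0$ --- your cocycle argument does establish $\ell_k=\ell_{k+1}$ as written.
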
  
\begin{proof}
We use the convention $w_{k} := 0$ for $k \in \Z \setminus \N$. Note that $F_w^* (\delta_k) = \overline{w_{k-1}} \,\delta_{k-1}$ and $B_w ^*(\delta_k) = \overline{w_{k+1}} \delta_{k+1}$ for $k \in \N$. Let $E_{\delta_k}$ denote the projection onto the span of $\delta_k$ for $k \in \N$. A straightforward computation tells us that $(F_w^n)^*F_w ^n = \sum_{k \in \N} (\prod_{i=0}^{n-1} |w_{k+i}|^2) E_{\delta_k}$ and $(B_w ^n)^* B_w ^n = \sum_{k \in \N} (\prod_{i=0}^{n-1} |w_{k-i}|^2) E_{\delta_k}$.
\vskip 0.1in
\noindent \textsl{Proof of (i).}  Since $$\big|F_w^n\big|^{\frac{1}{n}} = \sum_{k \in \N} \big(\prod_{i=0}^{n-1} |w_{k+i}|\big)^{\frac{1}{n}} E_{\delta_k},$$
the assertion follows.
\vskip 0.1in
\noindent \textsl{Proof of (ii).} Note that $|B_w ^n|^{\frac{1}{n}} \to 0$ in $\sot$. So the only possibility for its norm-limit is $0$. The rest of the proof is left to the reader.
\end{proof}

\section{Concluding remarks}

From \cite{haagerup_schultz}, \cite{Angles_HS} and our discussion, it would appear that there is an intimate connection of the notion of spectrality of an operator (and its variants) with the convergence of its normalized power sequence in an appropriate operator topology (such as norm-topology, $\sot$, $\wot$, etc.). Although the normalized power sequence of every compact operator on $\kH$ converges in norm (see \cite{bhat_bala}), there are compact operators which are {\bf not} spectral in the sense of Dunford. Might some modification of the notion of spectrality account for all compact operators? Or is there a phenomenon broader than spectrality which governs the convergence (or lack thereof) of the normalized power sequence of operators?  In view of these mysteries, we end this article with a general question.

\vskip 0.2in

\noindent {\bf Question :}
\textsl{Characterize all operators $A \in \kB(\kH)$ for which the normalized power sequence converges in norm/ $\sot$/ $\wot$. 
}

\bibliographystyle{amsalpha}
\nocite{*}
\bibliography{references}

\end{document}